\title[lattices in solvable Lie groups]
{Singularity of the varieties of  representations of lattices in solvable Lie groups}
\author{Hisashi Kasuya}
\theoremstyle{plain}
\theoremstyle{plain}
\theoremstyle{plain}
\theoremstyle{plain}
\newtheorem{theorem}{Theorem}[section] 
\theoremstyle{remark}
\theoremstyle{Main result}
\newtheorem{main result}{Main result}
\theoremstyle{lemma}
\newtheorem{lemma}[theorem]{Lemma}
\theoremstyle{definition}
\newtheorem{definition}[theorem]{Definition}
\theoremstyle{proposition}
\newtheorem{proposition}[theorem]{Proposition}
\theoremstyle{corollary}
\newtheorem{corollary}[theorem]{Corollary}
\theoremstyle{remark}
\newtheorem{example}{Example}
\address[Hisashi Kasuya]{Department of Mathematics, Tokyo Institute of Technology, 1-12- 1, O-okayama, Meguro, Tokyo 152-8551, Japan}
\email{kasuya@math.titech.ac.jp}
\keywords{variety of representations , Kuranishi space of differential graded Lie algebra,  solvmanifold}
\newcommand{\C}{\mathbb{C}}
\newcommand{\R}{\mathbb{R}}
\newcommand{\g}{\frak{g}}
\newcommand{\n}{\frak{n}}
\begin{document} 

\maketitle

\begin{abstract}
For a lattice $\Gamma$ of a simply connected solvable Lie group $G$,
we describe the analytic germ  in the  variety of representations of $\Gamma$  at the trivial representation  
as an analytic germ which is linearly embedded in the analytic germ associated with the nilpotent Lie algebra determined by $G$.
By this description, under certain assumption, we study the singularity of the analytic germ  in the  variety of representations of $\Gamma$  at the trivial representation by using the Kuranishi space construction.
By a similar technique, we also study deformations of holomorphic  structures of trivial vector bundles over complex parallelizable solvmanifolds.
\end{abstract}
\section{introduction}


Let $X$ be an analytic germ in $\C^{n}$ at the origin defined by analytic equations
\[f_{1}(z)=0,\dots,f_{k}(z)=0.
\]
We say that $X$ is cut out  by polynomial equations of degree at most $\nu$ if \[f_{1}(z),\dots,f_{k}(z)\] are polynomial functions of degree at most $\nu$ with trivial linear terms.
We say that an analytic germ $Y$ is linearly embedded in $X$ if for a  subspace  $V\subset \C^{n}$,  the germ  $Y$ is equivalent to an analytic germ in $V$  at the origin defined by analytic equations
\[f_{1}(z)=0,\dots,f_{k}(z)=0, \,\,\, z\in V.
\]
If $X$ is cut out  by polynomial equations of degree at most $\nu$ and $Y$ is linearly embedded in $X$, then $Y$ is also
cut out  by polynomial equations of degree at most $\nu$.
 
Let $\Gamma$ be a finitely generated group,  $A$ a linear algebraic group with  Lie algebra $\frak a$ and $R(\Gamma, A)$ the  set of homomorphisms $\Gamma\to A$.
Then $R(\Gamma, A)$ can be considered as an affine algebraic variety.
For a representation $\rho\in R(\Gamma, A)$ we are interested    in the analytic germ $(R(\Gamma, A),\rho)$. 
The singularity of the analytic germ $(R(\Gamma, A),\rho)$ can be considered as an obstruction of deformations of $\rho$.

If $\Gamma$ is the fundamental group of a manifold $M$,
we can geometrically describe 
the analytic germ $(R(\Gamma, A),\rho)$  by using the deformation theory of differential graded Lie algebras (for short, DGLAs) developed by Goldman and Millson \cite{GM1}, \cite{GM2}.
By such technique and the Hodge theory of local systems over  K\"ahler manifolds studied by Simpson \cite{Sim}, if $\Gamma$ is a K\"ahler group (i.e. a group which can be  the fundamental group of a compact K\"ahler manifold), then for a semisimple representation $\rho\to {\rm GL}_{m}(\C)$, the analytic germ $(R(\Gamma, {\rm GL}_{m}(\C)),\rho)$ is
cut out  by polynomial equations of degree at most $2$.

However in general the analytic germ $(R(\Gamma, A),\rho)$ is not cut out  by polynomial equations of degree at most $2$.
In \cite{GM1}, Goldman and Millson  observed that for a lattice $\Gamma$ in the three dimensional real Heisenberg group, the analytic germ $(R(\Gamma, A),{\bf 1})$ at the trivial representation ${\bf 1}$ is equivalent to a cubic cone.
In this paper we consider a certain  class of groups which contains this example.

Let $\Gamma$ be a lattice in a simply connected solvable Lie group $G$.
Then the solvmanifold $G/\Gamma$ is an aspherical manifold with the fundamental group $\Gamma$.
The purpose of this paper is to study the analytic germ $(R(\Gamma, A),{\bf 1})$ at  the trivial representation ${\bf 1}$.

For a manifold $M$ 
 the analytic germ at the trivial representation of the fundamental group of $M$ can be studied by the differential graded algebra (for short, DGA) $A^{\ast}(M)$ of  differential forms on  $M$.
 The following result is known.
\begin{theorem}[\cite{GM1,GM2},\cite{DP}]\label{DP}
Let $M$ be a compact manifold with the fundamental group $\Gamma$.
Suppose that we have a finite dimensional sub-DGA $C^{\ast}\subset A^{\ast}(M)\otimes \C$ such that the inclusion induces a cohomology isomorphism and $C^{0}=\C$.
Then  the analytic germ $(R(\Gamma, A),{\bf 1})$ at the trivial representation ${\bf 1}$ is equivalent to  the analytic germ $(F(C^{\ast}, \frak a), 0)$ at the origin $0$ for the affine variety
\[F(C^{\ast}, \frak a)=\left\{\omega\in C^{\ast}\otimes \frak a: d\omega+\frac{1}{2}[\omega,\omega]=0\right\}.
\] 
\end{theorem}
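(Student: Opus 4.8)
The plan is to realize both germs as the Maurer–Cartan locus (modulo gauge) of a common differential graded Lie algebra, and then invoke the general principle of Goldman–Millson that quasi-isomorphic DGLAs induce equivalent deformation germs.
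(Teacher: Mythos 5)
The paper itself offers no proof of this statement---it is quoted from Goldman--Millson and Dimca--Papadima---and your plan does name the strategy those references use. But as written the proposal has a genuine gap at exactly the point where the hypothesis $C^{0}=\C$ enters. If you realize both sides as the Maurer--Cartan locus \emph{modulo gauge} of quasi-isomorphic DGLAs, what you recover on the left is the character variety germ (representations modulo conjugation), not $(R(\Gamma,A),{\bf 1})$; and the right-hand side of the statement, $(F(C^{\ast},\frak a),0)$, is the plain Maurer--Cartan locus with no quotient taken at all, so an argument that uniformly quotients by gauge proves a different (and weaker) statement.

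What is actually needed is the \emph{based} (augmented) version of the theory: identify the germ $(R(\Gamma,A),{\bf 1})$, via the holonomy correspondence for flat connections on the trivial bundle, with the Maurer--Cartan elements of $A^{\ast}(M)\otimes\frak a$ modulo only those gauge transformations equal to the identity at a basepoint, and then establish quasi-isomorphism invariance for this augmented deformation functor. The hypothesis $C^{0}=\C$ is precisely what makes the based gauge group of $C^{\ast}\otimes\frak a$ trivial, so that no quotient survives on the finite-dimensional side; your sketch never invokes this hypothesis, which signals that the decisive step is missing. Finite-dimensionality of $C^{\ast}$ is likewise needed to pass from the formal deformation-functor statement to an honest analytic germ. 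All of this is carried out in the cited sources, but it constitutes the substance of the theorem rather than a routine application of the principle that quasi-isomorphic DGLAs have equivalent deformation germs.
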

Consider a solvmanifold $G/\Gamma$, Lie algebra $\g$ of $G$ and the cochain complex $\bigwedge \g^{\ast}$ which is regarded as a differential graded algebra of left-$G$-invariant forms on $G/\Gamma$.
Suppose that $G$ is completely solvable.
In \cite{Hatt}  Hattori proved that  the inclusion  $\bigwedge \g^{\ast}\subset A^{\ast}(G/\Gamma)$ induces a cohomology isomorphism.
By  Theorem \ref{DP} and Hattori's theorem, in \cite{DP}, Dimca and Papadima remarked that the analytic germ $(R(\Gamma, A),{\bf 1})$ is equivalent to 
 the analytic germ $(F(\bigwedge \g^{\ast}, \frak a), 0)$ at the origin $0$.
However, for a general solvmanifold $G/\Gamma$, the inclusion  $\bigwedge \g^{\ast}\subset A^{\ast}(G/\Gamma)$ does not induce a cohomology isomorphism.

In this paper, we consider general solvmanifolds.
Let $\g$ be a solvable Lie algebra.
Then we can define the nilpotent Lie algebra $\frak u$ called nilshadow of $\g$ which is uniquely determined by $\g$, as shown in \cite{DER}.

\begin{theorem}[\cite{K2}]
Let $G$ be a simply connected solvable Lie group with a lattice $\Gamma$ and $\g$ the Lie algebra of $G$.
We consider the nilshadow $\frak u$ of $\g$.
Then we have a  sub-DGA   $A^{\ast}_{\Gamma}\subset A^{\ast}(G/\Gamma)\otimes\C$ such that:\begin{itemize}
\item The inclusion $A^{\ast}_{\Gamma}\subset A^{\ast}(G/\Gamma)\otimes\C$ induces an  isomorphism in cohomology.
\item $A^{\ast}_{\Gamma}$ can be regarded as a sub-DGA of $\bigwedge \frak u^{\ast}\otimes \C$.
\end{itemize}
\end{theorem}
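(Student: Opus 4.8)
The plan is to construct $A^{\ast}_{\Gamma}$ explicitly out of the semisimple part of the adjoint representation, to identify it with a subcomplex of the Chevalley--Eilenberg complex of the nilshadow, and then to prove the cohomology isomorphism by comparing it with the de Rham cohomology computed through Mostow's fibration of $G/\Gamma$ over a torus. The algebra is straightforward; the cohomological comparison is the heart of the matter.

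First I would diagonalize the adjoint action. Since $G$ is simply connected and solvable, ${\rm Ad}(G)$ is a connected solvable subgroup of ${\rm Aut}(\g)$, so its semisimple part ${\rm Ad}_{s}\colon G\to {\rm GL}(\g\otimes\C)$ takes values in an abelian diagonalizable group. I would then choose a basis $X_{1},\dots,X_{n}$ of $\g\otimes\C$ in which ${\rm Ad}_{s}(g)={\rm diag}(\alpha_{1}(g),\dots,\alpha_{n}(g))$, where each $\alpha_{i}\colon G\to\C^{\ast}$ is a character, and let $x_{1},\dots,x_{n}$ be the dual left-invariant $1$-forms. By definition the nilshadow $\frak{u}$ is $\g\otimes\C$ equipped with the bracket obtained by deleting the ${\rm ad}_{s}$-contribution, so that $\bigwedge\frak{u}^{\ast}\otimes\C$ is the Chevalley--Eilenberg complex whose structure constants are exactly those of $\g$ with the weights $\alpha_{i}$ trivialized.

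Next I would define $A^{\ast}_{\Gamma}$ as the finite-dimensional span of the twisted invariant forms $\phi_{I}\,x_{i_{1}}\wedge\cdots\wedge x_{i_{p}}$, where $\phi_{I}$ runs over the monomials in the characters $\alpha_{i}$ and their inverses for which the whole product is $\Gamma$-invariant and hence defines a global form on $G/\Gamma$; here the input is Mostow's structure theorem, which gives that the nilradical $N$ satisfies $\Gamma\cap N$ a lattice in $N$, with $N\Gamma/\Gamma$ a nilmanifold and $G/N\Gamma$ a torus, so that enough such character monomials are $\Gamma$-periodic. A direct computation with the Maurer--Cartan equations shows that this span is closed under $d$ and $\wedge$, hence is a sub-DGA, and that the map sending $\phi_{I}\,x_{i_{1}}\wedge\cdots$ to $x_{i_{1}}\wedge\cdots$ intertwines the differentials, because the character twist accounts precisely for the difference between the brackets of $\g$ and of $\frak{u}$. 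This yields the embedding $A^{\ast}_{\Gamma}\subset\bigwedge\frak{u}^{\ast}\otimes\C$ asserted in the second bullet.

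Finally, the cohomology isomorphism. I would use the Mostow fibration $G/\Gamma\to T^{k}=G/N\Gamma$ with nilmanifold fibre $N/(\Gamma\cap N)$. Over the fibre, Nomizu's theorem identifies the cohomology with the invariant forms $\bigwedge\n^{\ast}$, while the characters $\alpha_{i}$ encode the monodromy of the associated local systems over the base torus; assembling these through the spectral sequence of the fibration and comparing with the model $A^{\ast}_{\Gamma}$ would give the isomorphism. I expect this step, rather than the algebraic bookkeeping, to be the main obstacle: one must show that the twisted invariant forms already exhaust the de Rham cohomology of a solvmanifold that is \emph{not} completely solvable, so Hattori's theorem does not apply, and the harmonic/averaging argument must instead be carried out with respect to the compact part of ${\rm Ad}_{s}$ in order to annihilate the contributions of the non-trivial characters and cut the cohomology down to the finite-dimensional model.
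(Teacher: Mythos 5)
Your construction of $A^{\ast}_{\Gamma}$ and the identification with a subcomplex of $\bigwedge \frak u^{\ast}\otimes\C$ essentially reproduce what the paper records in Section \ref{2SE}: one diagonalizes ${\rm Ad}_{s}$, sets $A^{p}_{\Gamma}=\langle \alpha_{I}x_{I}: (\alpha_{I})_{\vert\Gamma}=1\rangle$, and uses the frame $\tilde{\frak u}=\langle\alpha_{i}^{-1}X_{i}\rangle$, whose dual coframe is $\{\alpha_{i}x_{i}\}$ and which is a Lie algebra of vector fields isomorphic to $\frak u\otimes\C$ via $\alpha_{i}^{-1}X_{i}\mapsto X_{i}-{\rm ad}_{sX_{i}}$. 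One caution on your version of this step: you allow $\phi_{I}$ to range over arbitrary $\Gamma$-invariant monomials in the $\alpha_{i}^{\pm 1}$, but the twist attached to $x_{I}$ must be \emph{exactly} $\alpha_{I}=\alpha_{i_{1}}\cdots\alpha_{i_{p}}$; with any other invariant monomial the span is generally not closed under $d$, the complex becomes too large, and the map to $\bigwedge\frak u^{\ast}\otimes\C$ is no longer the restriction of the coframe identification. Also, that $g\mapsto({\rm Ad}_{g})_{s}$ is a homomorphism with abelian diagonalizable image is not automatic; it rests on the splitting $\g=V\oplus\n$ with $({\rm ad}_{A})_{s}(B)=0$ from \cite{DER}, which is how the paper defines ${\rm ad}_{s}$ in the first place.

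The genuine gap is the first bullet, the quasi-isomorphism, which is the entire content of the theorem and which you explicitly leave open ("I expect this step \dots to be the main obstacle"). The paper itself does not prove it either --- it quotes \cite[Corollary 7.6]{K2} --- but your proposed route through the Leray spectral sequence of the Mostow fibration $G/\Gamma\to G/N\Gamma$ with Nomizu on the fibre is not the argument of \cite{K2} and, as sketched, does not close. The coefficient local systems $H^{\ast}(\n)$ over the base torus are in general non-unitary and non-semisimple, so there is no direct averaging over "the compact part of ${\rm Ad}_{s}$" at the level of the $E_{2}$-term, and identifying $E_{2}$ with the cohomology of $A^{\ast}_{\Gamma}$ already presupposes knowing which character summands contribute --- i.e.\ the statement to be proved. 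The proof in \cite{K2} instead passes through the algebraic hull of $\Gamma$ and $G$: one realizes $G/\Gamma$ inside a torus bundle over (or under) a nilmanifold built from the unipotent hull, applies Nomizu/Hattori in that completely unipotent situation, and then takes invariants under the compact closure of the image of the characters, which is what actually kills the summands with $(\alpha_{I})_{\vert\Gamma}\neq 1$. Without carrying out some version of that reduction, your argument establishes the second bullet but not the first.
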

See Section \ref{2SE} for the constructions of the nilshadow and DGA $A_{\Gamma}^{\ast}$.
By this theorem and Theorem \ref{DP}, the analytic germ
$(R(\Gamma, A),{\bf 1})$ is equivalent to
 the analytic germ $(F(A^{\ast}_{\Gamma}, \frak a), 0)$.
By the second assertion of the theorem we have the following theorem.

\begin{theorem}\label{UTD}
Let $\Gamma$ be a lattice in a simply connected solvable Lie group $G$ and $\g$ the Lie algebra of $G$.
Let  $A$ be a linear algebraic group with the Lie algebra $\frak a$.
Consider the nilshadow $\frak u$ of $\g$.
Then the analytic germ $(R(\Gamma, A),{\bf 1})$ at the trivial representation ${\bf 1}$ is  linearly embedded in the analytic germ $(F(\bigwedge\frak u^{\ast}, \frak a), 0)$ at the origin $0$ for the affine variety
\[F(\bigwedge\frak u^{\ast}, \frak a)=\left\{\omega\in\bigwedge \frak u^{\ast}\otimes \frak a: d\omega+\frac{1}{2}[\omega,\omega]=0\right\}
\]
\end{theorem}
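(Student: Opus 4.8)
The plan is to read off the statement from the germ equivalence $(R(\Gamma, A),{\bf 1})\cong (F(A^{\ast}_{\Gamma}, \frak a), 0)$ already recorded in the text, by exhibiting $(F(A^{\ast}_{\Gamma}, \frak a), 0)$ as a linear slice of $(F(\bigwedge\frak u^{\ast}, \frak a), 0)$. In other words, once the representation germ has been identified with the Maurer--Cartan germ of the finite-dimensional DGA $A^{\ast}_{\Gamma}$ via Theorem \ref{DP}, it remains only to compare this germ with the Maurer--Cartan germ of the ambient $\bigwedge\frak u^{\ast}$, and this comparison is forced by the sub-DGA property supplied by the second assertion of the theorem of \cite{K2}.

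First I would take $V=A^{\ast}_{\Gamma}\otimes \frak a$ as a linear subspace of $\bigwedge\frak u^{\ast}\otimes \frak a$; this is legitimate because, by that second assertion, $A^{\ast}_{\Gamma}$ is a sub-DGA of $\bigwedge\frak u^{\ast}\otimes\C$, hence in particular a $\C$-linear subspace. Writing the defining expression of $F(\bigwedge\frak u^{\ast}, \frak a)$ as $f(\omega):=d\omega+\frac{1}{2}[\omega,\omega]$, where $d$ is the differential of $\bigwedge\frak u^{\ast}$ and $[\cdot,\cdot]$ couples the wedge product with the bracket of $\frak a$, I would observe that $f$ maps $V$ into $A^{\ast}_{\Gamma}\otimes \frak a$: indeed $A^{\ast}_{\Gamma}$ is closed under $d$ (being a subcomplex) and under the wedge product (being a subalgebra), so for $\omega\in V$ both $d\omega$ and $[\omega,\omega]$ remain in $A^{\ast}_{\Gamma}\otimes \frak a$. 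Consequently the value $f(\omega)$ for $\omega\in V$ is computed identically in $A^{\ast}_{\Gamma}\otimes \frak a$ and in $\bigwedge\frak u^{\ast}\otimes \frak a$, and since the inclusion of the former into the latter is injective, the vanishing of $f(\omega)$ is insensitive to which ambient space one uses. This yields the set-theoretic identity $F(A^{\ast}_{\Gamma}, \frak a)=F(\bigwedge\frak u^{\ast}, \frak a)\cap V$ together with matching defining equations.

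This is precisely the situation in the definition of linear embedding from the introduction: the germ $(F(A^{\ast}_{\Gamma}, \frak a), 0)$ is the germ inside the subspace $V$ cut out by the same analytic equations that cut out $F(\bigwedge\frak u^{\ast}, \frak a)$. Hence $(F(A^{\ast}_{\Gamma}, \frak a), 0)$ is linearly embedded in $(F(\bigwedge\frak u^{\ast}, \frak a), 0)$, and composing with the equivalence $(R(\Gamma, A),{\bf 1})\cong (F(A^{\ast}_{\Gamma}, \frak a), 0)$ gives the theorem. I do not expect a serious obstacle here, since the analytic and cohomological substance is entirely absorbed into Theorem \ref{DP} and the theorem of \cite{K2}; the only point needing careful bookkeeping is the claim that the coupled bracket on $\frak a$-valued forms preserves $V$, which reduces to closure of $A^{\ast}_{\Gamma}$ under the wedge product. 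The lone normalization worth confirming before invoking Theorem \ref{DP} is $A^{0}_{\Gamma}=\C$, which is furnished by the construction of $A^{\ast}_{\Gamma}$ recalled in Section \ref{2SE}.
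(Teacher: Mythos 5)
Your proposal is correct and follows essentially the same route as the paper: the author likewise combines Theorem \ref{DP} with the theorem from \cite{K2} to identify $(R(\Gamma,A),{\bf 1})$ with $(F(A^{\ast}_{\Gamma},\frak a),0)$, and then reads off the linear embedding directly from the fact that $A^{\ast}_{\Gamma}$ is a sub-DGA of $\bigwedge\frak u^{\ast}\otimes\C$. Your explicit bookkeeping (closure of $V=A^{\ast}_{\Gamma}\otimes\frak a$ under $d$ and the coupled bracket, and the normalization $A^{0}_{\Gamma}=\C$) only spells out details the paper leaves implicit.
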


This theorem is useful for estimating the singularity of the analytic germ $(R(\Gamma, A),{\bf 1})$.

Let $\n$ be a $\nu$-step nilpotent Lie algebra.
Consider the lower central series
\[\n=\n^{(1)}\supset \n^{(2)}\supset\dots \supset \n^{(\nu)}(\not=\{0\})\supset  \n^{(\nu+1)}=\{0\}
\]
where $\n^{(i+1)}=[\n,\n^{(i)}]$.
Take a subspace $\frak a^{(i)}$ such that $\n^{(i)}=\n^{(i+1)}\oplus \frak a^{(i)}$.
We have 
\[\n=\frak a^{(1)}\oplus \frak a^{(2)}\oplus \dots \oplus  \frak a^{(\nu)}.
\]
It is known that $[\n^{(i)},\n^{(j)}]\subset \n^{(i+j)}$ (see \cite{Cor}).
A nilpotent Lie algebra $\frak n$ is called naturally graded if
we can choose subspaces $\frak a^{(i)}$ such that $[\frak a^{(i)},\frak a^{(j)}]\subset \frak a^{(i+j)}$.

We prove the following proposition by using the construction of Kuranishi spaces of DGLAs.
\begin{proposition}\label{KKKUU}
Let $\n$ be a $\nu$-step naturally graded nilpotent Lie algebra and $\frak g$ a Lie algebra.
Then  the analytic germ $(F(\bigwedge \frak n^{\ast}, \frak g), 0)$
 is cut out by polynomial equations of degree at most $\nu+1$.
\end{proposition}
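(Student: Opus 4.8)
The plan is to describe the germ through the Kuranishi space of the DGLA $L^\bullet=\bigwedge\n^\ast\otimes\g$, whose differential is $d\otimes 1$ and whose bracket combines $\wedge$ with the bracket of $\g$, and then to bound the degree of the resulting defining equations. First I would fix a Hodge-type splitting $L^\bullet=(H^\bullet(\n)\otimes\g)\oplus\mathrm{im}\,d\oplus\mathrm{im}\,d^\ast$ with harmonic projection $\Pi$ and a homotopy $h$ of degree $-1$ satisfying $\mathrm{id}-\Pi=dh+hd$. For a harmonic parameter $a\in H^1(\n)\otimes\g$ the Kuranishi construction produces the unique small solution $\gamma(a)\in L^1$ of $\gamma=a-\tfrac12 h[\gamma,\gamma]$ and identifies $(F(\bigwedge\n^\ast,\g),0)$ with the germ at $0$ of the zero locus of the obstruction map $\kappa(a)=\tfrac12\Pi[\gamma(a),\gamma(a)]$ in $H^1(\n)\otimes\g$, up to a smooth factor coming from the $\mathrm{im}\,d$ directions. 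Since $\gamma(a)=a+O(a^2)$, the equations $\kappa(a)=0$ have trivial linear part, so everything reduces to showing $\deg\kappa\le\nu+1$.

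The degree bound I would extract from the weight grading furnished by the natural grading $\n=\bigoplus_{i=1}^\nu\frak a^{(i)}$: assign weight $i$ to $(\frak a^{(i)})^\ast$ and let weights add under $\wedge$. Since $d$ is dual to the bracket and $[\frak a^{(i)},\frak a^{(j)}]\subset\frak a^{(i+j)}$, the differential preserves weight, and I would choose the splitting so that $\Pi$ and $h$ preserve weight as well. Because $H^1(\n)=(\n/[\n,\n])^\ast=(\frak a^{(1)})^\ast$ is concentrated in weight $1$, the recursion $\gamma_1=a$, $\gamma_m=-\tfrac12 h\sum_{i+j=m}[\gamma_i,\gamma_j]$ shows by induction that the degree-$m$ homogeneous component $\gamma_m$ has weight $m$. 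As $L^1=\n^\ast\otimes\g$ carries only weights $1,\dots,\nu$, this forces $\gamma_m=0$ for $m>\nu$, so $\gamma$ is polynomial of degree $\le\nu$. The same accounting gives $\kappa=\sum_{m\ge2}\kappa_m$ with $\kappa_m$ homogeneous of degree $m$, of weight $m$, and valued in $H^2(\n)_m\otimes\g$; hence $\deg\kappa$ is at most the top weight occurring in $H^2(\n)$.

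The crux, and the step I expect to be the \emph{main obstacle}, is therefore the purely Lie-algebraic estimate $H^2(\n)_m=0$ for $m\ge\nu+2$. I would establish it on homology, where $H^2(\n)_m\cong(H_2(\n)_m)^\ast$ for the weight grading. Writing $\n=F/R$ with $F$ the free Lie algebra on $\frak a^{(1)}$ (graded with generators in weight $1$) and $R\subset\bigoplus_{k\ge2}F_k$ a graded ideal, Hopf's formula gives $H_2(\n)\cong R/[F,R]$, so the weights in $H_2(\n)$ are exactly those of a minimal generating set of the ideal $R$. Now $\nu$-step nilpotency means $\bigoplus_{k\ge\nu+1}F_k\subset R$, and since $F_k=[\frak a^{(1)},F_{k-1}]$ this subideal is generated by $F_{\nu+1}$, i.e.\ in weight $\nu+1$; the remaining relations of the naturally graded algebra $\n$ sit in weights $2,\dots,\nu$. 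Thus $R$ is generated in weights $\le\nu+1$, whence $H_2(\n)_m=0$ and dually $H^2(\n)_m=0$ for $m\ge\nu+2$. Combining the three steps, $\kappa$ is a system of polynomials of degree $\le\nu+1$ with trivial linear terms, and adjoining the smooth factor leaves these degrees unchanged, so $(F(\bigwedge\n^\ast,\g),0)$ is cut out by polynomial equations of degree at most $\nu+1$.
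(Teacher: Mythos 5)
Your proposal is correct, and its overall architecture is the same as the paper's: pass to the Kuranishi description of $(F(\bigwedge\n^{\ast},\g),0)$ as the zero locus of the harmonic projection of $[\phi(t),\phi(t)]$, use the weight grading coming from $\n=\bigoplus\frak a^{(i)}$ (with a weight-preserving homotopy) to show the recursively defined solution terminates at degree $\nu$, and then kill all terms of weight $>\nu+1$ in the obstruction. The genuine difference is in the key lemma that makes the last step work. The paper proves (Lemma \ref{22dd}) that \emph{every} closed $2$-form on a $\nu$-step nilpotent Lie algebra lies in $\bigoplus_{i+j\le\nu+1}\frak a^{(i)\ast}\wedge\frak a^{(j)\ast}$, by a direct inductive computation with $d\sigma=0$ whose base case is quoted from Benson--Gordon; this statement needs no natural grading and bounds the whole kernel of $d$ on $2$-forms. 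You instead prove the weaker but sufficient statement that $H^{2}(\n)$ has no component of weight $\ge\nu+2$, via duality with $H_{2}(\n)$, Hopf's formula $H_{2}(\n)\cong R/[F,R]$ for a graded presentation $\n=F/R$ by the free Lie algebra on $\frak a^{(1)}$, and the observation that $R$ is generated in weights $\le\nu+1$ because the degree $\ge\nu+1$ part of $R$ is the ideal generated by $F_{\nu+1}$. This is a clean, self-contained and more conceptual route (it identifies the relevant weights with those of a generating set of relations), at the price of using the natural grading essentially (you need $R$ to be a graded ideal, and $\Pi$ to be weight-preserving, so the argument does not directly give the paper's stronger, grading-free containment of $\ker d$). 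Your handling of the remaining points --- trivial linear part of $\kappa$, the trivial gauge factor since $d$ vanishes on $L^{0}=\g$, and the reduction $H^{2}(\n)_m\cong(H_{2}(\n)_m)^{\ast}$ --- is sound.
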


By this proposition and Theorem \ref{UTD}, we have the following theorem.

\begin{theorem}\label{realnat}
Let $\Gamma$ be a lattice in a simply connected solvable Lie group $G$ and $\g$ the Lie algebra of $G$.
Let  $A$ be a linear algebraic group with a Lie algebra $\frak a$.
Consider the nilshadow $\frak u$ of $\g$.
We suppose that the Lie algebra $\frak u$  is $\nu$-step naturally graded.
Then
the analytic germ $(R(\Gamma, A),{\bf 1})$ at the trivial representation ${\bf 1}$   is  cut out by
 polynomial equations of degree 
at most $\nu+1$.
\end{theorem}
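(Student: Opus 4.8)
The plan is to obtain Theorem \ref{realnat} as a direct chaining of Theorem \ref{UTD}, Proposition \ref{KKKUU}, and the elementary stability property of the class of germs cut out by low-degree polynomials that was recorded in the introduction. First I would apply Proposition \ref{KKKUU} to the nilshadow $\frak u$: since by hypothesis $\frak u$ is $\nu$-step naturally graded, setting $\frak n=\frak u$ and $\frak g=\frak a$ yields at once that the germ $(F(\bigwedge\frak u^{\ast},\frak a),0)$ is cut out by polynomial equations of degree at most $\nu+1$. I would note here that, although the literal defining equations $d\omega+\frac{1}{2}[\omega,\omega]=0$ are quadratic, the assertion concerns the germ \emph{up to equivalence}; so what Proposition \ref{KKKUU} actually supplies is a Kuranishi model whose obstruction equations have degree bounded by $\nu+1$, and this is precisely where the natural grading of $\frak u$ enters, by bounding the order of the higher (Massey-type) obstructions.

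Next I would invoke Theorem \ref{UTD}, which states that the germ $(R(\Gamma,A),{\bf 1})$ is linearly embedded in $(F(\bigwedge\frak u^{\ast},\frak a),0)$. Finally I would apply the observation from the introduction: if $X$ is cut out by polynomial equations of degree at most $\nu+1$ and $Y$ is linearly embedded in $X$, then $Y$ is itself cut out by polynomial equations of degree at most $\nu+1$, because one simply restricts the same defining polynomials to the linear subspace $V$, and such a restriction neither raises the degree nor introduces linear terms. Composing these three facts gives the conclusion that $(R(\Gamma,A),{\bf 1})$ is cut out by polynomial equations of degree at most $\nu+1$.

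Since all the substantive work is carried by the two cited results, I expect this theorem to be essentially a corollary, with no genuine obstacle in its own proof. The only points warranting care are bookkeeping ones: verifying that the hypotheses of Proposition \ref{KKKUU} are exactly met by $\frak u$ (that the nilshadow is indeed a nilpotent Lie algebra and that the natural-grading hypothesis coincides with the one used there), and confirming that the notion of linear embedding appearing in Theorem \ref{UTD} is the same as the one used in the introductory stability remark. Both are immediate from the definitions, so the real effort lies upstream, in establishing Proposition \ref{KKKUU} through the Kuranishi space construction and Theorem \ref{UTD} through the DGA $A^{\ast}_{\Gamma}$.
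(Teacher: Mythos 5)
Your proposal matches the paper's own derivation exactly: the paper obtains Theorem \ref{realnat} in one line by applying Proposition \ref{KKKUU} to the nilshadow $\frak u$ (with $\frak g=\frak a$), invoking the linear embedding of Theorem \ref{UTD}, and using the introductory observation that restricting degree-$\le\nu+1$ defining polynomials to a linear subspace again yields such polynomials. Your added remark that the degree bound really lives in the Kuranishi model rather than in the literal quadratic equations $d\omega+\frac{1}{2}[\omega,\omega]=0$ is consistent with how Proposition \ref{KKKUU} is actually proved in Section 3.
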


Let $\frak n$ be a two-step nilpotent Lie algebra.
For any complement $\frak a^{(1)}$ of $\n^{(2)}$ in $\frak n$, we have $\n=\frak a^{(1)}\oplus \n^{(2)}$ and $[\frak a^{(1)},\frak a^{(1)}]\subset \n^{(2)}$
and so a two-step nilpotent Lie algebra $\frak n$ is naturally graded.
Hence as an application of Theorem \ref{realnat}, we have the following Corollary
\begin{corollary}
Let $\Gamma$ be a lattice in a simply connected solvable Lie group $G$ and $\g$ the Lie algebra of $G$.
Let  $A$ be a linear algebraic group with a Lie algebra $\frak a$.
Consider the nilshadow $\frak u$ of $\g$.
We suppose that the Lie algebra $\frak u$  is two-step nilpotent.
Then
the analytic germ $(R(\Gamma, A),{\bf 1})$ at the trivial representation ${\bf 1}$ is    cut out by
 polynomial equations of degree 
at most $3$.
\end{corollary}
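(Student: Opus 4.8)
The plan is to obtain the Corollary as the special case $\nu = 2$ of Theorem \ref{realnat}, so that the only thing which genuinely needs to be established is that the hypothesis of Theorem \ref{realnat} is met, namely that the two-step nilpotent nilshadow $\frak u$ is naturally graded.

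First I would apply the structural observation recorded just above the statement to the Lie algebra $\frak u$ itself. Since $\frak u$ is two-step nilpotent, its lower central series is $\frak u = \frak u^{(1)} \supset \frak u^{(2)} \supset \frak u^{(3)} = \{0\}$ with $\frak u^{(2)} = [\frak u, \frak u]$. Choosing any complement $\frak a^{(1)}$ so that $\frak u = \frak a^{(1)} \oplus \frak u^{(2)}$ and setting $\frak a^{(2)} = \frak u^{(2)}$, I would verify the grading condition $[\frak a^{(i)}, \frak a^{(j)}] \subset \frak a^{(i+j)}$ for each pair of indices: the bracket $[\frak a^{(1)}, \frak a^{(1)}]$ lies in $[\frak u, \frak u] = \frak u^{(2)} = \frak a^{(2)}$, whereas $[\frak a^{(1)}, \frak a^{(2)}]$ and $[\frak a^{(2)}, \frak a^{(2)}]$ both vanish, being contained in $\frak u^{(3)} = \{0\}$ by the inclusion $[\frak u^{(i)}, \frak u^{(j)}] \subset \frak u^{(i+j)}$ recalled in the text. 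Hence $\frak u$ is $2$-step naturally graded.

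With the hypothesis verified, I would invoke Theorem \ref{realnat} with $\nu = 2$, which asserts that $(R(\Gamma, A), {\bf 1})$ is cut out by polynomial equations of degree at most $\nu + 1$; for $\nu = 2$ this is degree at most $3$, as claimed. There is no real obstacle in this argument: the analytic and cohomological content is absorbed entirely into Theorem \ref{realnat} (resting in turn on Theorem \ref{UTD} and Proposition \ref{KKKUU}), and the only step to be carried out by hand is the elementary bracket computation above, which is immediate once all triple commutators are known to vanish.
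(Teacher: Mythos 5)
Your proposal is correct and follows exactly the paper's route: the paper likewise notes that for any complement $\frak a^{(1)}$ of $\frak u^{(2)}$ one has $[\frak a^{(1)},\frak a^{(1)}]\subset \frak u^{(2)}$, so a two-step nilpotent Lie algebra is automatically naturally graded, and then applies Theorem \ref{realnat} with $\nu=2$. Your extra check that the brackets involving $\frak a^{(2)}$ vanish is a harmless elaboration of the same argument.
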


%

\section{Nilshadows and cohomology of solvmanifolds}\label{2SE}
Let $\g$ be a solvable $K$-Lie algebra for $K=\R$ or $\C$.
Let $\n$ be the nilradical of $\g$.
There exists a subvector space (not necessarily Lie algebra) $V$ of $\g$ so that
$\g=V\oplus \n$ as the direct sum of vector spaces and for any  $A,B\in V$ $({\rm ad}_A)_{s}(B)=0$ where $({\rm ad}_A)_{s}$  is the semi-simple part of ${\rm ad}_{A}$ (see \cite[Proposition I\hspace{-.1em}I\hspace{-.1em}I.1.1] {DER}).
We define the  map ${\rm ad}_{s}:\g\to D(\g)$ as 
${\rm ad}_{sA+X}=({\rm ad}_{A})_{s}$ for $A\in V$ and $X\in \n$.
Then we have $[{\rm ad}_{s}(\g), {\rm ad}_{s}(\g)]=0$ and ${\rm ad}_{s}$ is linear (see \cite[Proposition I\hspace{-.1em}I\hspace{-.1em}I.1.1] {DER}).
Since we have $[\g,\g]\subset \n$,  the  map ${\rm ad}_{s}:\g\to D(\g)$ is a representation and the image ${\rm ad}_{s}(\g)$ is abelian and consists of semi-simple elements.
 Let $\bar{\g} ={\rm Im} \,{\rm ad}_{s}\ltimes\g$
and
 \[\frak u=\{X-{\rm ad}_{sX}\in  \bar{\g}  \vert X\in\g\}.\]
Then we have $[\g,\g]\subset \n\subset \frak u$ and $\frak u$ is the nilradical of $\bar \g$ (see \cite{DER}).
Hence we have $\bar \g= {\rm Im} \,{\rm ad}_{s}\ltimes \frak u$.
It is known that the structure of the Lie algebra $\frak u$ is independent of a choice of a subvector space $V$ (see \cite[Corollary I\hspace{-.1em}I\hspace{-.1em}I.3.6]{DER} ).

\begin{lemma}{\rm (\cite[Lemma 2.2]{K2})}\label{semmm}
Suppose $\g=\R^{k}\ltimes _{\phi} \n$ such that $\phi$ is a semi-simple action and $\n$ is nilpotent.
Then the nilshadow $\frak u$ of $\g$ is the direct sum $\R^{k}\oplus \n$.
\end{lemma}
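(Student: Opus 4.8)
The plan is to realize the general nilshadow construction of Section~\ref{2SE} with the explicit complement $V=\R^{k}$ and then compute the bracket of $\frak u$ by hand. First I would record the bracket of $\g=\R^{k}\ltimes_{\phi}\n$: writing an element as a pair $(a,X)$ with $a\in\R^{k}$ and $X\in\n$, and using that $\R^{k}$ is abelian, one has
\[
[(a_{1},X_{1}),(a_{2},X_{2})]=\bigl(0,\ \phi(a_{1})X_{2}-\phi(a_{2})X_{1}+[X_{1},X_{2}]_{\n}\bigr).
\]
In particular, for $A=(a,0)$ the operator ${\rm ad}_{A}$ vanishes on $V=\R^{k}$ and acts as $\phi(a)$ on $\n$; since $\phi(a)$ is semi-simple by hypothesis, ${\rm ad}_{A}$ is a semi-simple endomorphism of the whole of $\g$. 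Hence $({\rm ad}_{A})_{s}={\rm ad}_{A}$, so $({\rm ad}_{A})_{s}(B)=[A,B]=0$ for all $A,B\in V$, and $V=\R^{k}$ is an admissible choice in the construction, with $\n$ the nilradical. It follows that ${\rm ad}_{s}$ sends $(a,X)$ to $({\rm ad}_{(a,0)})_{s}$, which acts on $\g$ as $0$ on $\R^{k}$ and as $\phi(a)$ on $\n$.

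Next I would identify $\frak u=\{Z-{\rm ad}_{sZ}\mid Z\in\g\}$ with $\g$ as a vector space via $Z\mapsto Z-{\rm ad}_{sZ}$ and transport the bracket of $\bar{\g}={\rm Im}\,{\rm ad}_{s}\ltimes\g$. Because ${\rm Im}\,{\rm ad}_{s}$ is abelian, the derivation component of the bracket vanishes, and for $Z_{i}=(a_{i},X_{i})$ the $\g$-component of $[Z_{1}-{\rm ad}_{sZ_{1}},\,Z_{2}-{\rm ad}_{sZ_{2}}]$ equals
\[
-{\rm ad}_{sZ_{1}}(Z_{2})+{\rm ad}_{sZ_{2}}(Z_{1})+[Z_{1},Z_{2}]_{\g}=\bigl(0,\ \phi(a_{2})X_{1}-\phi(a_{1})X_{2}\bigr)+\bigl(0,\ \phi(a_{1})X_{2}-\phi(a_{2})X_{1}+[X_{1},X_{2}]_{\n}\bigr).
\]
The conceptual heart of the proof is the cancellation here: the twisting terms $\pm\phi(a_{i})X_{j}$ produced by the semidirect product are exactly undone by the terms coming from ${\rm ad}_{s}$, so the bracket reduces to $\bigl(0,[X_{1},X_{2}]_{\n}\bigr)$, which again lies in $\frak u$.

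Reading off this formula shows that, under the identification $\frak u\cong\R^{k}\oplus\n$, the factor $\R^{k}$ is central and the bracket restricted to $\n$ is the original bracket of $\n$; hence $\frak u=\R^{k}\oplus\n$ as a direct sum of Lie algebras, which is the assertion. I expect the only points requiring care to be the verification that ${\rm ad}_{A}$ is semi-simple as an operator on the whole of $\g$ (not merely on $\n$) and the confirmation that $\R^{k}$ is a genuine complement to the nilradical so that the construction applies with $V=\R^{k}$; since the nilshadow is independent of the choice of $V$, the general case reduces to this computation. The cancellation displayed above is then routine bookkeeping.
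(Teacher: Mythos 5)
Your strategy---take $V=\R^{k}$ in the nilshadow construction, observe that ${\rm ad}_{(a,0)}$ is block-diagonal and already semi-simple so that ${\rm ad}_{s(a,X)}$ acts as $\phi(a)$ on $\n$ and as $0$ on $\R^{k}$, and then check that the twisting terms of the semidirect product cancel against the ${\rm ad}_{s}$-terms in the transported bracket---is the right one, and the cancellation computation itself is correct. (Note the paper gives no proof here; it quotes the statement from \cite[Lemma 2.2]{K2}, so there is nothing internal to compare against.) However, there is one step that fails as written: you assume that $\n$ is the nilradical of $\g$ and that $\R^{k}$ is a complement of it, and you explicitly defer this verification without carrying it out. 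This is false in general. For a solvable Lie algebra the nilradical is $\{Z\in\g: {\rm ad}_{Z}\ \text{nilpotent}\}$; since ${\rm ad}_{(a,X)}$ differs from the block operator $0\oplus\phi(a)$ by the nilpotent operator ${\rm ad}_{(0,X)}$, the element $(a,X)$ lies in the nilradical exactly when $\phi(a)$ is nilpotent, i.e.\ (by semi-simplicity) when $\phi(a)=0$. So the nilradical of $\R^{k}\ltimes_{\phi}\n$ is $\ker\phi\oplus\n$, which strictly contains $\n$ whenever $\phi$ is not injective (e.g.\ $\phi=0$ gives the already nilpotent algebra $\R^{k}\oplus\n$), and then $V=\R^{k}$ is not an admissible complement and ${\rm ad}_{s}$ cannot be defined from it.

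The gap is easy to close, and you should do so explicitly: take $V$ to be any complement of $\ker\phi$ in $\R^{k}$. For $A=(a,0)$ with $a\in V$ one still has $({\rm ad}_{A})_{s}={\rm ad}_{A}$ and $({\rm ad}_{A})_{s}(B)=0$ for $B\in V$, and for an arbitrary $(a,X)$ with $a=a_{V}+a_{0}$, $a_{0}\in\ker\phi$, the definition gives ${\rm ad}_{s(a,X)}={\rm ad}_{(a_{V},0)}$, which again acts as $\phi(a_{V})=\phi(a)$ on $\n$ and as $0$ on $\R^{k}$. Hence the formula you use for ${\rm ad}_{s}$ is correct in all cases, and your cancellation argument, together with the observation that ${\rm ad}_{s}$ vanishes on $[X_{1},X_{2}]\in\n$ so that the bracket closes in $\frak u$ with trivial ${\rm Im}\,{\rm ad}_{s}$-component, goes through verbatim; only the justification of that formula needs the corrected identification of the nilradical.
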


Let $G$ be a simply connected solvable Lie group with the $\R$-Lie algebra $\frak g$.
We denote by ${\rm Ad}_{s}:G\to {\rm Aut}(\g)$ the extension of ${\rm ad}_{s}$.
Then ${\rm Ad}_{s}(G)$ is diagonalizable.
Let $X_{1},\cdots ,X_{n}$ be a basis of $\g\otimes {\C}$ such that ${\rm Ad}_{s}$ is represented by diagonal matrices.
Then we have ${\rm Ad}_{sg}X_{i}=\alpha_{i}(g)X_{i}$ for characters $\alpha_{i}$ of $G$.
Let $x_{1},\dots,x_{n}$ be the dual basis of $X_{1},\dots ,X_{n}$.

We suppose $G$ has a lattice $\Gamma$.
Then we consider the sub-DGA $A^{\ast}_{\Gamma}$ of the de Rham complex $A^{\ast}(G/\Gamma)\otimes \C$ which is given by
\[
A^{p}_{\Gamma}
=\left\langle \alpha_{I} x_{I} {\Big \vert} \begin{array}{cc}I\subset \{1,\dots,n\},\\  (\alpha_{I})_{\vert_{\Gamma}}=1 \end{array}\right\rangle.
\]
where
for a multi-index $I=\{i_{1},\dots ,i_{p}\}$ we write $x_{I}=x_{i_{1}}\wedge\dots \wedge x_{i_{p}}$,  and $\alpha_{I}=\alpha_{i_{1}}\cdots \alpha_{i_{p}}$.
\begin{theorem}{\rm (\cite[Corollary 7.6]{K2})}
Let $G$ be a simply connected solvable Lie group with a lattice $\Gamma$.
Then we have
:\begin{itemize}
\item The inclusion $A^{\ast}_{\Gamma}\subset A^{\ast}(G/\Gamma)\otimes\C$ induces an isomorphism in cohomology.
\item $A^{\ast}_{\Gamma}$ can be regarded as a sub-DGA of $\bigwedge \frak u^{\ast}\otimes \C$.
\end{itemize}
\end{theorem}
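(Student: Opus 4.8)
The plan is to prove the two assertions in the reverse of the stated order, establishing the algebraic identification with the nilshadow complex first, since it simultaneously exhibits $A^{\ast}_{\Gamma}$ as a genuine sub-DGA (closed under $d$ and $\wedge$) and pins down the differential that the cohomological statement needs.

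For the second assertion, I would use the linear isomorphism $\g\otimes\C\cong\frak u\otimes\C$ coming from the parametrization $X\mapsto X-{\rm ad}_{sX}$, which identifies the dual bases; write $\tilde{x}_{i}\in\frak u^{\ast}$ for the image of $x_{i}$ and define $\Phi:A^{\ast}_{\Gamma}\to\bigwedge\frak u^{\ast}\otimes\C$ by $\alpha_{I}x_{I}\mapsto\tilde{x}_{I}$. Since $\Phi$ is injective and multiplicative by construction, everything rests on matching differentials. The nilshadow bracket $[X,Y]_{\frak u}=[X,Y]-{\rm ad}_{sX}Y+{\rm ad}_{sY}X$ differs from that of $\g$ exactly by the semisimple part ${\rm ad}_{s}$, so its Chevalley--Eilenberg differential $d_{\frak u}$ differs from the $\g$-differential by the weight term $\beta_{i}\wedge(\cdot)$, where $\beta_{i}$ is the left-invariant $1$-form equal to the weight $\lambda_{i}$ of ${\rm ad}_{s}$ on $X_{i}$. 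On the de Rham side, because $\alpha_{i}$ is a character with logarithmic derivative $\beta_{i}$, one computes $d(\alpha_{i}x_{i})=\alpha_{i}\beta_{i}\wedge x_{i}+\alpha_{i}\,dx_{i}$, so the factor $\alpha_{i}$ pulls through and the two extra terms are identified, giving $\Phi(d(\alpha_{i}x_{i}))=d_{\frak u}\tilde{x}_{i}$. The crucial bookkeeping is weight additivity: a structure constant of $\g$ or a pairing $\lambda_{i}(X_{m})$ is nonzero only when $\alpha_{i}=\alpha_{j}\alpha_{k}$, respectively $\alpha_{m}=1$, so that every term produced by $d$ is again of the form $\alpha_{J}x_{J}$ with $(\alpha_{J})|_{\Gamma}=1$. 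This simultaneously shows that $A^{\ast}_{\Gamma}$ is closed under $d$ and that $\Phi$ is a DGA isomorphism onto its image in $\bigwedge\frak u^{\ast}\otimes\C$.

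For the first assertion I would show that the inclusion $A^{\ast}_{\Gamma}\subset A^{\ast}(G/\Gamma)\otimes\C$ is a quasi-isomorphism by reducing $H^{\ast}(G/\Gamma;\C)$ to a finite-dimensional complex of ${\rm Ad}_{s}(G)$-finite forms. The steps would be: (i) invoke Mostow's theorem on the cohomology of solvmanifolds to replace $A^{\ast}(G/\Gamma)\otimes\C$ by its subcomplex of forms lying in a finite-dimensional $G$-subrepresentation, without changing cohomology; (ii) decompose such forms under the diagonalizable action ${\rm Ad}_{s}$, which, in the weight basis $X_{i}$ with characters $\alpha_{i}$, forces them to be spanned by the products $\alpha_{I}x_{I}$; and (iii) observe that such a product descends to $G/\Gamma$ exactly when $(\alpha_{I})|_{\Gamma}=1$, recovering precisely the span defining $A^{\ast}_{\Gamma}$. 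Combining these identifies $A^{\ast}_{\Gamma}$ with the cohomology-computing subcomplex and yields the isomorphism; in the completely solvable case all $\alpha_{i}$ restrict trivially to $\Gamma$ and this degenerates to Hattori's theorem, while the nilpotent case is Nomizu's theorem.

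The hard part will be the first assertion, specifically steps (i)--(iii). Unlike the nilpotent or completely solvable settings, the invariant forms $\bigwedge\g^{\ast}$ no longer compute $H^{\ast}(G/\Gamma;\C)$, so the entire content is that enlarging to the still finite-dimensional model $A^{\ast}_{\Gamma}$ restores the isomorphism. The delicate points are the careful application of Mostow's finiteness theorem to isolate the ${\rm Ad}_{s}$-finite subcomplex and, above all, the arithmetic of the lattice entering through the condition $(\alpha_{I})|_{\Gamma}=1$: one must verify that imposing triviality on $\Gamma$ neither discards cohomology classes nor destroys closedness under $d$, which is exactly where the choice of the diagonalizing characters $\alpha_{i}$ and the structure of the nilshadow $\frak u$ become essential.
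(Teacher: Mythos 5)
First, note that the paper does not actually prove this theorem: it is imported wholesale from \cite[Corollary 7.6]{K2}, and the only argument the paper supplies is the elaboration of the second bullet, namely that the vector fields $\alpha_{1}^{-1}X_{1},\dots,\alpha_{n}^{-1}X_{n}$ span a Lie subalgebra $\tilde{\frak u}$ of the complex vector fields on $G$ isomorphic to $\frak u\otimes\C$ via $\alpha_{i}^{-1}X_{i}\mapsto X_{i}-{\rm ad}_{sX_{i}}$, so that the forms $\alpha_{I}x_{I}$ sit inside $\bigwedge\tilde{\frak u}^{\ast}\cong\bigwedge\frak u^{\ast}\otimes\C$. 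Your treatment of the second assertion is the dual of exactly this: the weight computation $d(\alpha_{i}x_{i})=\alpha_{i}(dx_{i}+\beta_{i}\wedge x_{i})$ matching the Chevalley--Eilenberg differential of the nilshadow bracket $[X,Y]-{\rm ad}_{sX}Y+{\rm ad}_{sY}X$ is correct, as is the additivity-of-weights bookkeeping showing closedness of $A^{\ast}_{\Gamma}$ under $d$ (using that nontrivial weight vectors lie in $\n\otimes\C$, so $\lambda_{i}(X_{m})\neq 0$ forces $\alpha_{m}=1$). That part is fine and coincides with the paper's sketch.

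The genuine gap is in your first assertion, concentrated in step (i). There is no off-the-shelf ``Mostow's theorem'' that lets you replace $A^{\ast}(G/\Gamma)\otimes\C$ by the subcomplex of forms lying in a finite-dimensional $G$-subrepresentation without changing cohomology; the classical Mostow/Hattori results give $H^{\ast}(G/\Gamma;\R)\cong H^{\ast}(\g;\R)$ only under the Mostow condition (e.g.\ complete solvability), and the whole point of the present theorem is that this fails in general and must be repaired by the lattice-dependent enlargement $A^{\ast}_{\Gamma}\supset(\bigwedge\g^{\ast})^{G}$. The actual proof in \cite{K2} does not proceed by truncating the de Rham complex to ${\rm Ad}_{s}$-finite forms; it goes through the algebraic hull of $G$, the unipotent hull of $\Gamma$, and a Nomizu-type theorem computing the cohomology of nilmanifolds with coefficients in flat bundles coming from the diagonal characters $\alpha_{i}$, together with the density of $\Gamma$ in the relevant algebraic group. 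Your steps (ii) and (iii) are sound once (i) is granted, but (i) as stated is essentially the theorem itself, so the proposal does not close the argument --- it relocates the difficulty into an unproven reduction. You correctly flag this as ``the hard part,'' but flagging it is not the same as supplying the missing input.
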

We explain the second assertion more precisely.
We consider the subspace $\tilde{\frak u}=\langle \alpha_{1}^{-1}X_{1},\dots, \alpha_{n}^{-1}X_{n}\rangle$ of the space of  complex valued vector fields on $G$.
Then $\tilde{\frak u}=\langle \alpha_{1}^{-1}X_{1},\dots, \alpha_{n}^{-1}X_{n}\rangle$ is a Lie sub-algebra of the Lie algebra of vector fields and the map 
\[\tilde{\frak u}\ni \alpha_{i}^{-1}X_{i}\mapsto X_{i}-{\rm ad}_{sX_{i}}\in \frak u\otimes \C
\]
is a  Lie algebra isomorphism 
where $\frak u$ is the nilshadow of $\g$ (see \cite[Proof of Lemma 5.2]{K2}).

\begin{example}\label{EXC}
Let $\g$ be a $4$-dimensional Lie algebra such that \begin{itemize}
\item $\g=\langle T,X,Y,Z\rangle$
\item $[T,X]=X$, $[T,Y]=-Y$, $[X,Y]=Z$.
\end{itemize}
Then we have the splitting $\g=\langle T\rangle\ltimes \langle X,Y,Z\rangle$ such that $\langle X,Y,Z\rangle$ is the three dimensional real Heisenberg Lie algebra $\frak h(3)$ and the action of $\langle T\rangle$ is semi-simple.
Hence by Lemma \ref{semmm}, the nilshadow $\frak u$ of $\g$ is given by $\frak u=\R\oplus \frak h(3)$.
Hence  as similar to \cite[Example 9.1]{GM1}, the analytic germ $(F(\bigwedge\frak u^{\ast}, \frak a), 0)$ is  equivalent to a cubic cone.

Consider the simply connected solvable Lie group $G$ whose Lie algebra is $\g$.
Then $G$ has a lattice $\Gamma$ \cite{Saw}.
We can easily show that the DGA $A^{\ast}(G/\Gamma)$ is formal and hence the analytic germ $(R(\Gamma, A),{\bf 1})$ at the trivial representation ${\bf 1}$ is    cut out by
 polynomial equations of degree 
at most $2$.
Hence $(R(\Gamma, A),{\bf 1})$ is  linearly embedded in the analytic germ $(F(\bigwedge\frak u^{\ast}, \frak a), 0)$ but its singularity is different from $(F(\bigwedge\frak u^{\ast}, \frak a), 0)$.
\end{example}

By Lemma \ref{semmm}, we give one more corollary of Theorem \ref{realnat}.
\begin{corollary}
Let  $\g=\R^{k}\ltimes _{\phi} \n$ such that $\phi$ is a semi-simple action and $\n$ is a $\nu$-step naturally graded  nilpotent Lie algebra.
Consider the simply connected solvable Lie group $G$ whose Lie algebra is $\g$.
Suppose $G$ has a lattice $\Gamma$.
Then
the analytic germ $(R(\Gamma, A),{\bf 1})$ at the trivial representation ${\bf 1}$   is  cut out by
 polynomial equations of degree 
at most $\nu+1$.
\end{corollary}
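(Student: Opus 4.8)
The plan is to reduce the statement directly to Theorem \ref{realnat}, the only work being to identify the nilshadow explicitly and to check that it inherits the naturally graded structure of $\n$. First, since $\g=\R^{k}\ltimes_{\phi}\n$ with $\phi$ semi-simple, Lemma \ref{semmm} identifies the nilshadow as the Lie algebra direct sum $\frak u=\R^{k}\oplus\n$, in which the $\R^{k}$-factor is abelian and satisfies $[\R^{k},\n]=0$, so that $\R^{k}$ is central in $\frak u$. In view of Theorem \ref{realnat}, it therefore suffices to prove that $\R^{k}\oplus\n$ is again $\nu$-step naturally graded whenever $\n$ is.

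To this end I would first record how the lower central series behaves under a direct sum with a central abelian factor. Because every bracket involving the central $\R^{k}$ vanishes, one has $[\frak u,\frak u]=[\n,\n]=\n^{(2)}$, and inductively $\frak u^{(i)}=\n^{(i)}$ for all $i\geq 2$. Consequently $\frak u^{(\nu)}=\n^{(\nu)}\neq\{0\}$ while $\frak u^{(\nu+1)}=\n^{(\nu+1)}=\{0\}$, so $\frak u$ is exactly $\nu$-step nilpotent. Next I would exhibit the grading: if $\n=\frak b^{(1)}\oplus\cdots\oplus\frak b^{(\nu)}$ is a natural grading of $\n$, with $[\frak b^{(i)},\frak b^{(j)}]\subset\frak b^{(i+j)}$, then I set $\frak c^{(1)}=\R^{k}\oplus\frak b^{(1)}$ and $\frak c^{(i)}=\frak b^{(i)}$ for $2\leq i\leq\nu$, which gives $\frak u=\frak c^{(1)}\oplus\cdots\oplus\frak c^{(\nu)}$.

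It remains to verify $[\frak c^{(i)},\frak c^{(j)}]\subset\frak c^{(i+j)}$. The brackets not involving $\R^{k}$ are exactly the brackets of $\n$, hence satisfy the required inclusion; the brackets that do involve $\R^{k}$ all vanish by centrality and therefore lie in any subspace. Thus $\frak u=\R^{k}\oplus\n$ is $\nu$-step naturally graded. Applying Theorem \ref{realnat} to the lattice $\Gamma$ in $G$ with this nilshadow $\frak u$ then shows that $(R(\Gamma,A),{\bf 1})$ is cut out by polynomial equations of degree at most $\nu+1$. The only point requiring any care is that placing the central factor $\R^{k}$ in degree one does not disturb the grading; this is immediate from the centrality of $\R^{k}$, so I anticipate no serious obstacle.
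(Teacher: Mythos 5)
Your proposal is correct and takes essentially the same route as the paper, which derives this corollary immediately from Lemma \ref{semmm} (identifying the nilshadow as $\R^{k}\oplus\n$) together with Theorem \ref{realnat}. The paper leaves implicit the verification that $\R^{k}\oplus\n$ is still $\nu$-step naturally graded; your explicit check, placing the central factor $\R^{k}$ in degree one, is exactly the right way to fill in that detail.
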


\section{Proof of Proposition \ref{KKKUU}}
\subsection{Finite-dimensional DGAs of Poincar\'e duality type}



Let $A^{\ast}$ be a finite-dimensional graded commutative $\C$-algebra.
\begin{definition}[\cite{KSP}]
$A^{\ast}$ is of Poincar\'e duality type (PD-type) if the following conditions hold:
\begin{itemize}
\item $A^{\ast<0}=0$ and $A^{0}=\C 1$ where $1$ is the identity element of $A^{\ast}$.
\item For some positive integer $n$, $A^{\ast>n}=0$ and $A^{n}=\C v$ for $v\not=0$.
\item For any $0<i<n$ the bi-linear map $A^{i}\times A^{n-i}\ni (\alpha,\beta)\mapsto \alpha\cdot \beta\in A^{n}$ is non-degenerate.
\end{itemize}
\end{definition}

Suppose $A^{\ast}$ is of PD-type.
Let $h$ be a Hermitian metric on $A^{\ast}$ which is compatible with the grading.
Take $v\in A^{n}$ such that $h(v,v)=1$.
Define the $\C$-anti-linear map $\bar\ast: A^{i}\to A^{n-i}$ as $\alpha\cdot \bar\ast\beta=h(\alpha,\beta)v$.

\begin{definition}[\cite{KSP}]
A finite-dimensional DGA $(A^{\ast},d)$ is of PD-type if  the following conditions hold:
\begin{itemize}
\item  $A^{\ast}$ is  a finite-dimensional graded $\C$-algebra of PD-type.
\item $dA^{n-1}=0$ and $dA^{0}=0$.
\end{itemize}
\end{definition}

Let $(A^{\ast},d)$ be a finite-dimensional DGA of PD-type.
Denote $d^{\ast}=-\bar\ast d\bar\ast$.
\begin{lemma}[\cite{KSP}]\label{add}
We have $h(d\alpha, \beta)=h(\alpha,d^{\ast}\beta)$ for $\alpha\in A^{i-1}$ and $\beta\in A^{i}$.
\end{lemma}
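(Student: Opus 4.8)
The plan is to mimic the classical Hodge-theoretic derivation of the adjointness of $d$ and $d^{\ast}$, with the Poincar\'e duality isomorphism $A^{n}\cong\C$ playing the role of integration over a closed oriented manifold. Write $\int\colon A^{n}\to\C$ for the linear functional sending $\lambda v$ to $\lambda$, so that the defining relation $\alpha\cdot\bar\ast\beta=h(\alpha,\beta)v$ reads $h(\mu,\nu)=\int(\mu\cdot\bar\ast\nu)$ whenever $\mu,\nu$ have the same degree. I would use this identity to convert every Hermitian inner product occurring in the statement into a product landing in $A^{n}$, after which the computation becomes purely algebraic and is controlled by the single hypothesis $dA^{n-1}=0$.

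First I would treat the left-hand side $h(d\alpha,\beta)=\int(d\alpha\cdot\bar\ast\beta)$ by an algebraic integration by parts. Since $\alpha\in A^{i-1}$ and $\bar\ast\beta\in A^{n-i}$, the element $\alpha\cdot\bar\ast\beta$ lies in $A^{n-1}$, and the graded Leibniz rule gives
\[d(\alpha\cdot\bar\ast\beta)=d\alpha\cdot\bar\ast\beta+(-1)^{i-1}\alpha\cdot d\bar\ast\beta.\]
The hypothesis $dA^{n-1}=0$ annihilates the left-hand side, which is precisely the algebraic shadow of the statement that the integral of an exact top form vanishes. Hence $\int(d\alpha\cdot\bar\ast\beta)=(-1)^{i}\int(\alpha\cdot d\bar\ast\beta)$, that is, $h(d\alpha,\beta)=(-1)^{i}\int(\alpha\cdot d\bar\ast\beta)$.

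Next I would rewrite the right-hand side. Using $h(\alpha,d^{\ast}\beta)=\int(\alpha\cdot\bar\ast d^{\ast}\beta)$ together with $d^{\ast}=-\bar\ast d\bar\ast$, the problem reduces to evaluating the double star $\bar\ast\bar\ast$ on the element $d\bar\ast\beta\in A^{n-i+1}$. The key auxiliary step is therefore an involutivity formula asserting that $\bar\ast\bar\ast$ is a sign times the identity on each graded piece; I would prove it from the non-degeneracy of the Poincar\'e pairing together with graded-commutativity of the product, by characterizing $\bar\ast\bar\ast\omega$ through the pairings $\int(\psi\cdot\bar\ast\bar\ast\omega)=h(\psi,\bar\ast\omega)$ against all $\psi$ of complementary degree and unwinding the two applications of the defining relation (the normalization $h(v,v)=1$ is what makes $\bar\ast$, and hence this constant, well defined). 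Granting the involution, $\bar\ast d^{\ast}\beta$ becomes a scalar multiple of $d\bar\ast\beta$, so $h(\alpha,d^{\ast}\beta)$ is a scalar multiple of $\int(\alpha\cdot d\bar\ast\beta)$, and comparison with the previous paragraph yields the identity.

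The step I expect to be the main obstacle is the sign bookkeeping in this last paragraph. One must track the signs produced by the Leibniz rule, by graded-commutativity when reordering factors of complementary degree inside $\int$, and by the involution $\bar\ast\bar\ast$, and verify that they conspire to cancel the overall factor $(-1)^{i}$, so that the uniform sign in $d^{\ast}=-\bar\ast d\bar\ast$ reproduces exactly $h(d\alpha,\beta)$. Once these signs are reconciled, the lemma follows, every other ingredient being a formal consequence of the defining property of $\bar\ast$ and of the hypothesis $dA^{n-1}=0$.
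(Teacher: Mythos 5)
There is nothing in the paper to compare against: Lemma \ref{add} is quoted from \cite{KSP} with no proof given here, so your argument has to stand on its own. Its first half does stand: introducing $\int\colon A^{n}\to\C$, $\int(\lambda v)=\lambda$, and combining the graded Leibniz rule with $dA^{n-1}=0$ to get $h(d\alpha,\beta)v=d\alpha\cdot\bar\ast\beta=(-1)^{i}\,\alpha\cdot d\bar\ast\beta$ is correct and is the unavoidable core of any proof. The genuine gap is your ``key auxiliary step'': the claim that $\bar\ast\bar\ast$ is a sign times the identity on each graded piece, provable from non-degeneracy of the Poincar\'e pairing and graded commutativity alone. That is false under the paper's hypotheses, where $h$ is only assumed compatible with the grading: $\bar\ast$ mixes $h$ with the multiplication, and $\bar\ast\bar\ast$ on $A^{k}$ is essentially the Gram matrix of $h$ measured against the pairing, not a scalar. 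Concretely, take $A^{\ast}=\bigwedge\langle e_{1},e_{2}\rangle$ with $d=0$, $v=e_{1}e_{2}$, $h(v,v)=1$, and the grading-compatible metric $h(e_{1},e_{1})=2$, $h(e_{2},e_{2})=1$, $h(e_{1},e_{2})=0$: then $\bar\ast e_{1}=2e_{2}$, $\bar\ast e_{2}=-e_{1}$, hence $\bar\ast\bar\ast e_{1}=-2e_{1}$. Since your treatment of $h(\alpha,d^{\ast}\beta)$ rests entirely on this involutivity, the proposal does not go through without an extra compatibility assumption between $h$ and the algebra structure (which does hold for the metrics the paper actually uses on $\bigwedge\n^{\ast}$, namely those induced by a metric on $\n$ with monomials orthogonal, but is not a formal consequence of non-degeneracy).

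The sign bookkeeping you postponed is also not a routine verification that will ``conspire to cancel.'' Even in the compatible case, where $\bar\ast\bar\ast=(-1)^{k(n-k)}$ on $A^{k}$, combining it with your integration by parts gives $h(d\alpha,\beta)=(-1)^{(i-1)(n-i)}h(\alpha,d^{\ast}\beta)$ for the uniform convention $d^{\ast}=-\bar\ast d\bar\ast$, and the sign $(-1)^{(i-1)(n-i)}$ is not always $+1$ when $n$ is odd. For instance, in $\bigwedge\langle e_{1},e_{2},e_{3}\rangle$ with $de_{3}=e_{1}e_{2}$ (the Heisenberg algebra, $n=3$, $i=2$) and the standard metric, one computes $\bar\ast(e_{1}e_{2})=e_{3}$, so $d^{\ast}(e_{1}e_{2})=-\bar\ast d e_{3}=-e_{3}$, giving $h(de_{3},e_{1}e_{2})=1$ but $h(e_{3},d^{\ast}(e_{1}e_{2}))=-1$. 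So with the stated conventions the adjointness identity holds only up to a degree-dependent sign; this is harmless for everything downstream (Lemma \ref{fiho} and the Kuranishi construction are unaffected, since replacing $d^{\ast}$ by $\pm d^{\ast}$ degreewise changes no kernels or images), and it presumably reflects a sign convention in \cite{KSP}, but it means the program you outlined --- forcing all signs to cancel so as to recover the uniform-sign identity --- cannot be completed as stated. A sound write-up should either keep the degreewise sign explicit or impose the compatibility of $h$ with the pairing and restrict the claim accordingly.
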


Define $\Delta=dd^{\ast}+d^{\ast}d$.
and ${\mathcal H}^{\ast}(A)=\ker \Delta$.
By Lemma \ref{add} and finiteness of the dimension of $A^{\ast}$, we can easily show the following lemma.

\begin{lemma}[\cite{KSP}]\label{fiho}
We have the Hodge decomposition
\[A^{r}={\mathcal H}^{r}(A)\oplus \Delta(A^{r})={\mathcal H}^{r}(A)\oplus d(A^{r-1})\oplus d^{\ast}(A^{r+1}).
\]
By this decomposition, the inclusion ${\mathcal H}^{\ast}(A)\subset A^{\ast}$ induces a  isomorphism
\[{\mathcal H}^{p}(A)\cong H^{p}(A)
\]
of vector spaces.
\end{lemma}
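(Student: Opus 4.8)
The plan is to run the standard finite-dimensional Hodge-theoretic argument, where the finiteness of $\dim A^\ast$ removes all analytic difficulties and reduces everything to linear algebra on the Hermitian space $(A^\ast,h)$. The one substantive input is Lemma \ref{add}, which says that $d^\ast$ is the adjoint of $d$ with respect to $h$; together with the conjugate symmetry of the Hermitian form this also yields the companion relation $h(d^\ast\alpha,\beta)=h(\alpha,d\beta)$. Note first that the degree bookkeeping is consistent: since $\bar\ast\colon A^i\to A^{n-i}$ is $\C$-anti-linear and $d\colon A^i\to A^{i+1}$ is $\C$-linear, the composite $d^\ast=-\bar\ast d\bar\ast$ is a $\C$-linear map lowering degree by $1$, and $(d^\ast)^2=0$ follows from $d^2=0$ by taking adjoints (the adjoint of $d\circ d$ is $d^\ast\circ d^\ast$).

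First I would identify the harmonic space. Using Lemma \ref{add} and conjugate symmetry one computes
\[
h(\Delta\alpha,\alpha)=h(dd^\ast\alpha,\alpha)+h(d^\ast d\alpha,\alpha)=h(d^\ast\alpha,d^\ast\alpha)+h(d\alpha,d\alpha),
\]
so $h(\Delta\alpha,\alpha)$ is a sum of two nonnegative real numbers and vanishes exactly when $d\alpha=0$ and $d^\ast\alpha=0$. This shows ${\mathcal H}^r(A)=\ker\Delta\cap A^r=\ker d\cap\ker d^\ast\cap A^r$. The same computation shows that $\Delta$ is self-adjoint, since $h(\Delta\alpha,\beta)=h(d^\ast\alpha,d^\ast\beta)+h(d\alpha,d\beta)=h(\alpha,\Delta\beta)$.

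Next I would produce the decomposition. Because $\Delta$ is a self-adjoint endomorphism of the finite-dimensional Hermitian space $A^r$, we have the orthogonal splitting $A^r=\ker\Delta\oplus\operatorname{im}\Delta$, using the elementary fact $(\operatorname{im}\Delta)^\perp=\ker\Delta^\ast=\ker\Delta$; this is exactly $A^r={\mathcal H}^r(A)\oplus\Delta(A^r)$. To refine $\Delta(A^r)$ into $d(A^{r-1})\oplus d^\ast(A^{r+1})$, I would check that the three subspaces ${\mathcal H}^r(A)$, $d(A^{r-1})$, $d^\ast(A^{r+1})$ are mutually orthogonal: one has $h(d\alpha,d^\ast\beta)=h(\alpha,(d^\ast)^2\beta)=0$, while harmonic elements are annihilated by both $d$ and $d^\ast$ and hence are orthogonal to $d(A^{r-1})$ and to $d^\ast(A^{r+1})$. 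Their direct sum is therefore contained in $A^r$ and orthogonal to ${\mathcal H}^r(A)$; comparing with $A^r={\mathcal H}^r(A)\oplus\Delta(A^r)$ and noting $\Delta(A^r)\subset d(A^{r-1})+d^\ast(A^{r+1})$ forces $\Delta(A^r)=d(A^{r-1})\oplus d^\ast(A^{r+1})$, giving the stated Hodge decomposition.

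Finally I would deduce the cohomology isomorphism. The key point is that a $d$-closed element has no $d^\ast$-exact component: writing $\xi=\eta+d\alpha+d^\ast\beta\in A^p$ with $\eta$ harmonic, the condition $d\xi=0$ yields $dd^\ast\beta=0$, hence $h(dd^\ast\beta,\beta)=h(d^\ast\beta,d^\ast\beta)=0$ and so $d^\ast\beta=0$. Thus $\ker(d\colon A^p\to A^{p+1})={\mathcal H}^p(A)\oplus d(A^{p-1})$, and passing to the quotient by $\operatorname{im}(d\colon A^{p-1}\to A^p)=d(A^{p-1})$ shows that the inclusion ${\mathcal H}^p(A)\subset\ker d$ induces an isomorphism ${\mathcal H}^p(A)\cong H^p(A)$. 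I expect the only real care to be needed in the bookkeeping of the Hermitian, $\C$-anti-linear structure---ensuring that each adjointness identity and each degree shift is applied in the correct direction---rather than in any conceptual step, since over a finite-dimensional space the orthogonal complement and the splitting induced by a self-adjoint operator exist automatically.
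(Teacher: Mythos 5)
Your proof is correct and is exactly the argument the paper has in mind: the paper gives no details for this lemma, saying only that it follows ``by Lemma \ref{add} and finiteness of the dimension of $A^{\ast}$'' (citing \cite{KSP}), and your write-up supplies precisely that standard finite-dimensional Hodge-theoretic reasoning, with the adjointness identity, self-adjointness of $\Delta$, the orthogonal splitting $A^{r}=\ker\Delta\oplus\Delta(A^{r})$, and the identification $\ker d\cap A^{p}={\mathcal H}^{p}(A)\oplus d(A^{p-1})$ all handled correctly. No gaps.
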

We denote by $H$ the projection $H: A^{p} \to{\mathcal H}^{p}(A)$ and define the operator $G$  as the composition
$ \Delta^{-1}_{\vert \Delta(A^{p})}\circ ({\rm id }-H)$.
Let $\beta:A^{\ast}\to dA^{\ast-1}$ be the projection for the decomposition 
\[A^{r}={\mathcal H}^{r}(A)\oplus d(A^{r-1})\oplus d^{\ast}(A^{r+1}).
\]
The restriction map $d: d^{\ast}(A^{\ast})\to d(A^{\ast-1})$ is an isomorphism.
Take the inverse $d^{-1}:d(A^{\ast-1})\to d^{\ast}(A^{\ast})$.
Consider the map $d^{\ast}G:A^{\ast}\to A^{\ast-1}$.
Then for $\omega\in  {\mathcal H}^{r}(A)$, $d^{\ast}x\in d^{\ast}(A^{r})$ and $d^{\ast}y \in d^{\ast}(A^{r+1})$,
we have
\[d^{\ast}G(\omega+dd^{\ast}x+d^{\ast}y)=d^{\ast}(dd^{\ast})^{-1}dd^{\ast}x=d^{\ast}x.
\]
Hence we have $d^{\ast}G=d^{-1}\circ \beta$.

\subsection{Kuranishi spaces of finite-dimensional DGLAs}\label{FINKU}

Let $L^{\ast}$ be a finite-dimensional DGLA with a differential $d$.
Consider the splitting $d(L^{p})\to L^{p}$ for the short exact sequence
\[\xymatrix{
0\ar[r]& {\rm ker} \,d_{\vert_{L^{p}}} \ar[r]& L^{p}\ar[r]^d&d(L^{p})\ar[r]&0
}
\]
and the splitting $H^{p}(L^{\ast})\to  {\rm ker} \,d_{\vert_{L^{p}}}$ for the short exact sequence
\[\xymatrix{
0\ar[r]& d(L^{p-1})\ar[r]&  {\rm ker} \,d_{\vert_{L^{p}}}\ar[r]&H^{p}(L^{\ast})\ar[r]&0.
}
\]
Denote by $\mathcal A^{p}$ and $\mathcal H^{p}$ the images of the splittings $d(L^{p})\to L^{p}$ and $H^{p}(L^{\ast})\to {\rm ker} \,d_{\vert_{L^{p}}}$  respectively.
Then we have 
\[L^{p}= {\mathcal H}^{p}\oplus d(L^{p-1})\oplus \mathcal A^{p}.
\]
Consider the projections  $\beta^{\ast}: L^{\ast}\to  d(L^{\ast-1})$, $H: L^{\ast}\to \mathcal H^{\ast}$ and $\alpha^{\beta}: L^{\ast}\to \mathcal A^{\ast}$.
Since the restriction $ d : \mathcal A^{p} \to d(L^{p})$ is an isomorphism, we have the inverse $d^{-1}: d(L^{p})\to \mathcal A^{p}$ of  $ d : \mathcal A^{p} \to d(L^{p})$.
We define $\delta=d^{-1}\circ \beta:L^{p+1}\to L^{p}$.
Define the map $F: L^{1}\to L^{1}$ as 
\[F(\zeta)=\zeta+\frac{1}{2}\delta[\zeta,\zeta].
\]
Then by the inverse function theorem, on a small ball $B$ in $L^{1}$, the map $F$ is an analytic diffeomorphism.
Then the Kuranishi space ${\mathcal K}(L^{\ast})$ is defined by
\[{\mathcal K}(L^{\ast})=\{\eta\in F(B)\cap \mathcal H^{1}: H([F^{-1}(\eta),F^{-1}(\eta)])=0\}.
\]
It is known that the analytic germ $({\mathcal K}(L^{\ast}),0)$ is equivalent to the germ at the origin for the variety
\[\left\{\zeta\in L^{1}: d\zeta+\frac{1}{2}[\zeta,\zeta]=0, \,\, \delta\zeta=0\right\}
\]
(see \cite[Theorem 2.6]{GM2}).
In particular, if $d(L^{0})=0$, then 
 ${\mathcal K}(L^{\ast})$ is equivalent to the germ at the origin for the variety
\[\left\{\zeta\in L^{1}: d\zeta+\frac{1}{2}[\zeta,\zeta]=0 \right\}.
\]

Take a basis $\zeta_{1},\dots, \zeta_{m}$ of ${\mathcal H}^{1}$.
For parameters $t=( t_{i})$,  we consider the formal power series $\phi(t)=\sum_{r} \phi_{r}(t)$ with values in  $L^{1}$ given inductively by $\phi_{1}(t)=\sum t_{i}\zeta_{j}$ and
\[\phi_{r}(t)=-\frac{1}{2}\sum_{s=1}^{r-1}\delta[\phi_{s}(t), \phi_{r-s}(t)].\]
Then $F^{-1}$ is given by $\phi_{1}(t)\mapsto \phi(t)$ and  
the Kuranishi space $ {\mathcal K}(L^{\ast})$ is an analytic germ in $\C^{m}$ at the origin defined by equations
\[H\left([\phi(t),\phi(t)]\right)=0.
\]

Let $A$ be a finite-dimensional DGA of PD-type and $\g$ a Lie algebra, and
 consider the DGLA $A^{\ast}\otimes \g$.
Then we have the Hodge decomposition
\[A^{\ast}\otimes \g={\mathcal H}^{p}(A)\otimes \g\oplus d(A^{p-1})\otimes \g\oplus d^{\ast}(A^{p+1})\otimes\g
\]
as above with $\delta=d^{\ast} G\otimes {\rm id}$.
Take a basis $\zeta_{1},\dots, \zeta_{m}$ of ${\mathcal H}^{1}(A^{\ast})\otimes \g$.
For parameters $t=( t_{i})$,  we consider the formal power series $\phi(t)=\sum_{r} \phi_{r}(t)$ with values in  $A^{1}\otimes \g$ given inductively by $\phi_{1}(t)=\sum t_{i}\zeta_{j}$ and
\[\phi_{r}(t)=-\frac{1}{2}\sum_{s=1}^{r-1}d^{\ast} G\otimes {\rm id}[\phi_{s}(t), \phi_{r-s}(t)].\]
By the above argument we have the following lemma.
\begin{lemma}\label{kkku}
The analytic  germ $(F(A^{\ast}, \frak g), 0)$ is equivalent to the analytic germ in $\C^{m}$ at the origin defined by equations
\[H\left([\phi(t),\phi(t)]\right)=0.
\]
\end{lemma}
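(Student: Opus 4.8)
The plan is to read Lemma \ref{kkku} as the specialization of the general Kuranishi construction of Section \ref{FINKU} to the finite-dimensional DGLA $L^{\ast}=A^{\ast}\otimes \frak g$, and then to check that the explicit power series $\phi(t)$ is exactly the inverse diffeomorphism $F^{-1}$ appearing in that construction.

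First I would fix the splittings required in Section \ref{FINKU} by means of the Hodge decomposition of the PD-type DGA $A^{\ast}$ tensored with $\frak g$. Namely, I take $\mathcal H^{\ast}=\mathcal H^{\ast}(A)\otimes \frak g$ and $\mathcal A^{\ast}=d^{\ast}(A^{\ast+1})\otimes \frak g$, so that
\[
L^{p}=\mathcal H^{p}(A)\otimes \frak g\oplus d(A^{p-1})\otimes \frak g\oplus d^{\ast}(A^{p+1})\otimes \frak g.
\]
For these choices the projection $\beta$ onto $d(L^{\ast-1})$ and the inverse $d^{-1}$ of $d\colon \mathcal A^{\ast}\to d(L^{\ast-1})$ are the operators of the corresponding decomposition of $A^{\ast}$ tensored with ${\rm id}_{\frak g}$; hence the operator $\delta=d^{-1}\circ \beta$ of the abstract construction equals $d^{\ast}G\otimes {\rm id}$ by the identity $d^{\ast}G=d^{-1}\circ \beta$ established above. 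This is precisely the operator used in the inductive definition of $\phi_{r}(t)$. Moreover, since $A^{\ast}$ is a DGA of PD-type we have $dA^{0}=0$, so $d(L^{0})=dA^{0}\otimes \frak g=0$; the special case of the Kuranishi description recalled in Section \ref{FINKU} then shows that $(\mathcal K(L^{\ast}),0)$ is equivalent to the germ at the origin of $\{\zeta\in L^{1}:d\zeta+\frac{1}{2}[\zeta,\zeta]=0\}$, that is, to $(F(A^{\ast},\frak g),0)$.

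The remaining and computationally central point is to confirm $\phi(t)=F^{-1}(\phi_{1}(t))$ for $F(\zeta)=\zeta+\frac{1}{2}\delta[\zeta,\zeta]$. I would substitute $\phi(t)=\sum_{r\ge 1}\phi_{r}(t)$, note that each $\phi_{r}(t)$ is homogeneous of degree $r$ in $t$, and expand by bilinearity
\[
F(\phi(t))=\sum_{r\ge 1}\phi_{r}(t)+\frac{1}{2}\sum_{r\ge 2}\sum_{s=1}^{r-1}\delta[\phi_{s}(t),\phi_{r-s}(t)].
\]
Collecting homogeneous components, the degree-one part is $\phi_{1}(t)$, while for $r\ge 2$ the degree-$r$ part is $\phi_{r}(t)+\frac{1}{2}\sum_{s=1}^{r-1}\delta[\phi_{s}(t),\phi_{r-s}(t)]$, which vanishes by the very recursion defining $\phi_{r}(t)$. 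Thus $F(\phi(t))=\phi_{1}(t)$, and since $t\mapsto \phi_{1}(t)=\sum_{i}t_{i}\zeta_{i}$ is the chosen linear isomorphism $\C^{m}\to \mathcal H^{1}$, the map $t\mapsto \phi(t)$ realizes $F^{-1}$ on $F(B)\cap \mathcal H^{1}$. Substituting $F^{-1}(\eta)=\phi(t)$ with $\eta=\phi_{1}(t)$ into the defining condition $H([F^{-1}(\eta),F^{-1}(\eta)])=0$ of $\mathcal K(L^{\ast})$ turns it into $H([\phi(t),\phi(t)])=0$, which together with the equivalence of the previous paragraph proves the lemma. I expect the only genuinely delicate part to be the degree bookkeeping in this last computation, ensuring that the recursion produces exactly the cancellation above; everything else is a matching of the PD-type Hodge data with the abstract splittings.
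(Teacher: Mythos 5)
Your proposal is correct and follows essentially the same route as the paper: the paper's proof of Lemma \ref{kkku} is precisely the specialization of the general Kuranishi construction of Section \ref{FINKU} to $L^{\ast}=A^{\ast}\otimes\frak g$ with the Hodge splittings, using $dA^{0}=0$ (PD-type) to drop the gauge condition $\delta\zeta=0$ and the identity $d^{\ast}G=d^{-1}\circ\beta$ to identify $\delta$ with $d^{\ast}G\otimes{\rm id}$. The only difference is that you spell out the verification $F(\phi(t))=\phi_{1}(t)$, which the paper asserts without computation; your bookkeeping there is correct.
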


\subsection{Nilpotent Lie algebras}
Let $\n$ be a $\nu$-step nilpotent $K$-Lie algebra for $K=\R$ or $\C$.
Consider the lower central series
\[\n=\n^{(1)}\supset \n^{(2)}\supset\dots \supset \n^{(\nu)}\supset  \n^{(\nu+1)}=\{0\}
\]
where $\n^{(i+1)}=[\n,\n^{(i)}]$.
Take a subspace $\frak a^{(i)}$ such that $\n^{(i)}=\n^{(i+1)}\oplus \frak a^{(i)}$.
We have 
\[\n=\frak a^{(1)}\oplus \frak a^{(2)}\oplus \dots \oplus  \frak a^{(\nu)}.
\]
Consider the dual spaces $ \n^{\ast}$ and $\frak a^{(i)\ast}$ of $\n$ and $\frak a^{(i)}$ respectively.
We consider the cochain complex $\bigwedge \n^{\ast}$ of the Lie algebra with the differential $d$.
Then  $\bigwedge \n^{\ast}$ is a a finite-dimensional DGA of PD-type.
We have 
\[\bigwedge \n^{\ast} =\left(\bigwedge \frak a^{(1)\ast}\right)\wedge \dots \wedge \left(\bigwedge \frak a^{(\nu)\ast}\right).
\]
We have 
\[H^{1}(\n)={\rm ker}\, d_{\bigwedge^{1} \n^{\ast}}=  \frak a^{(1)\ast}.
\]

\begin{lemma}\label{22dd}
\[{\rm ker}\, d_{\bigwedge^{2} \n^{\ast}}\subset \bigoplus_{i+j\le \nu+1, i\le j} \frak a^{(i)\ast}\wedge \frak a^{(j)\ast}.
\]
\end{lemma}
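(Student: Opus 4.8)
The plan is to use the weight grading attached to the decomposition $\n=\frak a^{(1)}\oplus\dots\oplus\frak a^{(\nu)}$. Declare $\frak a^{(i)\ast}$ to have weight $i$, so that $\bigwedge\n^{\ast}$ becomes weight-graded with $\frak a^{(i)\ast}\wedge\frak a^{(j)\ast}$ sitting in weight $i+j$. The first step is to check that $d$ never raises the weight: for $\xi\in\frak a^{(k)\ast}$ and $X\in\frak a^{(i)}$, $Y\in\frak a^{(j)}$ we have $d\xi(X,Y)=-\xi([X,Y])$ with $[X,Y]\in[\n^{(i)},\n^{(j)}]\subset\n^{(i+j)}=\bigoplus_{l\ge i+j}\frak a^{(l)}$, so $\xi([X,Y])=0$ unless $i+j\le k$; hence $d\frak a^{(k)\ast}\subset\bigoplus_{i+j\le k}\frak a^{(i)\ast}\wedge\frak a^{(j)\ast}$, and the derivation property propagates this to all of $\bigwedge\n^{\ast}$. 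Writing $d=d_0+(\text{strictly weight-lowering terms})$, the weight-preserving part $d_0$ is precisely the Chevalley--Eilenberg differential of the associated naturally graded Lie algebra $\mathrm{gr}\,\n=\bigoplus_i\frak a^{(i)}$, with $[\frak a^{(i)},\frak a^{(j)}]\subset\frak a^{(i+j)}$.

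Next I would reduce to a statement about $\mathrm{gr}\,\n$. Let $\omega\in\ker d_{\bigwedge^{2}\n^{\ast}}$ be nonzero and let $W$ be the largest weight occurring in $\omega$, with $\omega_W$ the corresponding component. Since $d$ does not raise the weight, the weight-$W$ part of the equation $d\omega=0$ reads $d_0\omega_W=0$, so $\omega_W$ is a nonzero $2$-cocycle of $\mathrm{gr}\,\n$ of weight $W$. Thus it suffices to prove the graded statement: in a naturally graded $\nu$-step nilpotent Lie algebra every $2$-cocycle of weight $w\ge\nu+2$ vanishes. This forces $W\le\nu+1$, which is exactly the asserted containment.

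For the graded statement the decisive observation is that weight $w\ge\nu+2$ leaves no room for a degree-one factor: if $\omega_{ij}\in\frak a^{(i)\ast}\wedge\frak a^{(j)\ast}$ is a nonzero component of a weight-$w$ form, then $i=w-j\ge w-\nu\ge2$ and likewise $j\ge2$, so $\omega(U,V)=0$ as soon as $U$ or $V$ lies in $\frak a^{(1)}$. I would then prove $\omega(\frak a^{(a)},\frak a^{(w-a)})=0$ by induction on $a=2,3,\dots$. Evaluating the cocycle identity $\omega([X,Y],Z)-\omega([X,Z],Y)+\omega([Y,Z],X)=0$ at $X\in\frak a^{(1)}$, $Y\in\frak a^{(a-1)}$, $Z\in\frak a^{(w-a)}$, the term $\omega([Y,Z],X)$ vanishes because $X\in\frak a^{(1)}$, while $\omega([X,Z],Y)$ vanishes either because $Y\in\frak a^{(1)}$ (when $a=2$) or, for $a\ge3$, by the inductive hypothesis applied to the component at index $a-1$ (note $[X,Z]\in\frak a^{(w-a+1)}$). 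Only $\omega([X,Y],Z)=0$ survives; since $\mathrm{gr}\,\n$ is generated in degree one, $[\frak a^{(1)},\frak a^{(a-1)}]=\frak a^{(a)}$, and this gives $\omega(\frak a^{(a)},\frak a^{(w-a)})=0$. Running $a$ up through all admissible indices yields $\omega=0$.

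The step I expect to be the main obstacle is precisely this last induction. The cocycle identity applied to three homogeneous vectors tends to produce only a period-two symmetry among the weight components rather than a genuine descent, so the whole argument hinges on inserting the degree-one vector $X$ in the slot that makes two of the three terms drop out for different structural reasons --- one from $\omega$ annihilating $\frak a^{(1)}$ and one from the inductive hypothesis --- leaving a single term that generation in degree one identifies with the sought component. This is also where the threshold $\nu+2$ is forced: at weight $\nu+1$ a degree-one factor can persist, as the closed $2$-form $x\wedge z$ on $\frak h(3)=\langle X,Y,Z\rangle$ with $[X,Y]=Z$ (where $x,z$ are dual to $X,Z$) already shows.
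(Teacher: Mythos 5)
Your proof is correct, and it takes a genuinely different route from the paper's. The paper works on $\n$ itself: it introduces a notion of $k$-decomposability for a closed $2$-form $\sigma$, runs a \emph{downward} induction on $k$ by evaluating the cocycle identity with one argument taken deep in the lower central series ($Z\in\n^{(k)}$), and defers the starting case $k=\nu-1$ to \cite[Lemma 2.8]{BG}. You instead (i) note that $d$ does not raise the weight attached to the splitting $\n=\frak a^{(1)}\oplus\dots\oplus\frak a^{(\nu)}$, so the top-weight component of a closed $2$-form is a cocycle for the associated graded Lie algebra $\mathrm{gr}\,\n$, and (ii) kill homogeneous $2$-cocycles of weight at least $\nu+2$ on $\mathrm{gr}\,\n$ by an \emph{upward} induction that inserts a degree-one vector and uses $[\frak a^{(1)},\frak a^{(a-1)}]=\frak a^{(a)}$. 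That generation-in-degree-one identity is the engine of your induction and deserves an explicit line of justification (it follows from $\n^{(a)}=[\n,\n^{(a-1)}]$ by projecting onto the weight-$a$ piece), but it is indeed automatic for the associated graded of the lower central series. The passage to $\mathrm{gr}\,\n$ eliminates the cross-term bookkeeping that the paper's $k$-decomposability is designed to manage, and your argument is self-contained where the paper cites Benson--Gordon; what the paper's version buys is that it never leaves the original algebra $\n$, in keeping with how the rest of Section 3 manipulates the subspaces $W_k$ directly, while yours isolates a clean graded statement and pinpoints, via the $\frak h(3)$ example, why the threshold is exactly $\nu+1$.
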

\begin{proof}
Let $\sigma \in {\rm ker}\, d_{\bigwedge^{2} \n^{\ast}}$.
For a positive  integer $k<\nu$,  we say that  $\sigma$ is  $k$-decomposable if we have a decomposition
\[\sigma=\sigma_{1}+\sigma_{2}+\sigma_{3}
\]
such that:
\begin{itemize}
\item 
\[\sigma_{1}\in \bigoplus_{i+j\le \nu+1, i\le j, k<j} \frak a^{(i)\ast}\wedge \frak a^{(j)\ast}.
\]
\item 
\[\sigma_{2}\in \bigoplus_{ i\le k} \frak a^{(i)\ast}\wedge \frak a^{(k)\ast}.
\]
\item
\[\sigma_{3}\in \bigoplus_{ i\le j, j<k} \frak a^{(i)\ast}\wedge \frak a^{(j)\ast}.
\]
\end{itemize}
If $k\le \frac{\nu+1}{2}$, then we have
\[\sigma \in \bigoplus_{i+j\le \nu+1, i\le j} \frak a^{(i)\ast}\wedge \frak a^{(j)\ast}.\]
Consider the case $\frac{\nu+1}{2}<k$.
For $X,Y\in \n$ and $Z\in \n^{(k)}$,
we have $\sigma_{1}([X,Y],Z)=0$, $\sigma_{2}(X,[Y,Z])=0$, $\sigma_{2}(Y,[X,Z])=0$,  $\sigma_{3}([X,Y],Z)=0$, $\sigma_{3}(X,[Y,Z])=0$ and $\sigma_{3}(Y,[X,Z])=0$.
By $d\sigma=0$, we have
\[\sigma_{2}([X,Y],Z)=\sigma_{1}(X,[Y,Z])-\sigma_{1}(Y,[X,Z]).
\] 
Taking $X\in\n $ and $Y\in \n^{(l-1)}$ such that $\nu+1<k+l$, we have
\[\sigma_{2}([X,Y],Z)=0.\]
Hence for $W\in \n^{(l)}$ and $Z\in \n^{(k)}$ such that $\nu+1<k+l$, we have
\[\sigma_{2}(W,Z)=0.\]
Thus we have 
\[\sigma_{2}\in \bigoplus_{i+k\le \nu+1, i\le k} \frak a^{(i)\ast}\wedge \frak a^{(k)\ast}.
\]
Hence taking $\sigma_{1}^{\prime}=\sigma_{1}+\sigma_{2}$ and $\sigma_{3}=\sigma_{2}^{\prime}+\sigma_{3}^{\prime}$ such that 
\[\sigma_{2}^{\prime}\in \bigoplus_{ i\le k-1} \frak a^{(i)\ast}\wedge \frak a^{(k-1)\ast}
\]
and
\[\sigma_{3}^{\prime}\in \bigoplus_{ i\le j, j<k-1} \frak a^{(i)\ast}\wedge \frak a^{(j)\ast},
\]
by the decomposition $\sigma=\sigma_{1}^{\prime}+\sigma^{\prime}_{2}+\sigma^{\prime}_{3}$, $\sigma$ is $(k-1)$-decomposable.
Thus we can say that if $\sigma$ is $k$-decomposable and $\frac{\nu+1}{2}<k-l-1$ for an integer $l$, then $\sigma$ is also $(k-l)$-decomposable.
Take $l$ such that $k-l\le \frac{\nu+1}{2}$.
Then we can say 
\[\sigma \in \bigoplus_{i+j\le \nu+1, i\le j} \frak a^{(i)\ast}\wedge \frak a^{(j)\ast}.\]

Hence it is sufficient to show the above decomposition  of $\sigma$ for $k=\nu-1$.
This was shown in \cite[Lemma 2.8]{BG}.
Hence the Lemma follows.

\end{proof}

It is known that $[\n^{(i)},\n^{(j)}]\subset \n^{(i+j)}$ (see \cite{Cor}) and hence we have
\[d \left(\frak a^{(k)\ast}\right)\subset \bigoplus_{i+j\le k, i\le j} \frak a^{(i)\ast}\wedge \frak a^{(j)\ast}.
\]

\begin{definition}
A nilpotent Lie algebra $\frak n$ is called naturally graded if
we can take subspaces $\frak a^{(i)}\subset \frak n$ such that  $\n^{(i)}=\n^{(i+1)}\oplus \frak a^{(i)}$ and 
$[\frak a^{(i)},\frak a^{(j)}]\subset \frak a^{(i+j)}$ for each $i,j$
where \[\n=\n^{(1)}\supset \n^{(2)}\supset\dots \supset \n^{(\nu)}\supset  \n^{(\nu+1)}=\{0\}
\]
is  the lower central series of $\frak n$.
\end{definition}
If   $\frak n$ is naturally graded, then we have 
\[d \left(\frak a^{(k)\ast}\right)\subset W_{k}
\]
where  $ W_{k}=\bigoplus_{i+j= k, i\le j} \frak a^{(i)\ast}\wedge \frak a^{(j)\ast}$.

Let $g$ be a Hermitian metric on $\n$ such that the sum
\[\n=\frak a^{(1)}\oplus \frak a^{(2)}\oplus \dots \oplus  \frak a^{(\nu)}\]
is an orthogonal direct sum.
Then $g$ give a Hermitian metric on the finite-dimensional DGA  $\bigwedge \n^{\ast}$ of PD-type.
Consider the decomposition
\[\bigwedge^{r}\n^{\ast}={\mathcal H}^{r}(\bigwedge\n^{\ast})\oplus d(\bigwedge^{r-1}\n^{\ast})\oplus d^{\ast}(\bigwedge^{r+1}\n^{\ast}).
\]
Then 
\[\bigwedge^{2}\n^{\ast}=W_{1}\oplus W_{2}\oplus \dots \oplus W_{2\nu}
\]
is an orthogonal direct sum and we have
$d^{-1}\circ \beta(W_{k})\subset \frak a^{(k)\ast}$ by $d(a^{(k)\ast})\subset W_{k}$.

\begin{proposition}

Let $\n$ be a $\nu$-step naturally graded nilpotent Lie algebra and $\frak g$ a Lie algebra.
Then  the analytic germ $(F(\bigwedge \frak u^{\ast}, \frak g), 0)$
 is cut out by polynomial equations of degree at most $\nu+1$.

\end{proposition}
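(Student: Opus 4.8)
The plan is to run the explicit Kuranishi presentation of Lemma \ref{kkku} and to track every term through the weight grading induced by the natural grading of $\n$. Concretely, I would assign to $\frak a^{(k)\ast}$ the weight $k$ and extend multiplicatively, so that $\bigwedge^{\ast}\n^{\ast}$ becomes a weight-graded algebra whose weight-$k$ part in degree $2$ is exactly $W_{k}=\bigoplus_{i+j=k,\,i\le j}\frak a^{(i)\ast}\wedge\frak a^{(j)\ast}$. First I would check that all the operators entering the construction are weight-homogeneous of degree $0$. Since $\n$ is naturally graded, $d(\frak a^{(k)\ast})\subset W_{k}$, so $d$ preserves weight; because the metric $g$ is weight-orthogonal, the operator $\bar\ast$ sends a form of weight $w$ to one of weight $W_{\mathrm{top}}-w$, where $W_{\mathrm{top}}=\sum_{k}k\dim\frak a^{(k)}$, whence $d^{\ast}=-\bar\ast d\bar\ast$, $\Delta$, the Green operator $G$, the harmonic projection $H$, and $\delta=d^{\ast}G\otimes\mathrm{id}$ all preserve weight. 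In particular $\delta(W_{k}\otimes\g)\subset\frak a^{(k)\ast}\otimes\g$, as already recorded in the excerpt.

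Next I would prove by induction on $r$ that the Kuranishi series satisfies $\phi_{r}(t)\in\frak a^{(r)\ast}\otimes\g$. The base case is $\phi_{1}(t)\in{\mathcal H}^{1}(\bigwedge\n^{\ast})\otimes\g=\frak a^{(1)\ast}\otimes\g$, using $H^{1}(\n)=\frak a^{(1)\ast}$. For the inductive step, if $\phi_{s}$ and $\phi_{r-s}$ have weights $s$ and $r-s$, then $[\phi_{s}(t),\phi_{r-s}(t)]$ lands in $W_{r}\otimes\g$, and applying the weight-preserving $\delta$, which maps $W_{r}$ into $\frak a^{(r)\ast}$, yields $\phi_{r}(t)\in\frak a^{(r)\ast}\otimes\g$ from the recursion. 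Since $\n$ is $\nu$-step, $\frak a^{(r)\ast}=0$ for $r>\nu$, so $\phi_{r}=0$ for $r>\nu$; as $\phi_{r}$ is homogeneous of degree $r$ in $t$, the series $\phi(t)=\sum_{r=1}^{\nu}\phi_{r}(t)$ is in fact a polynomial in $t$ of degree at most $\nu$.

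Finally I would analyze the defining equations $H([\phi(t),\phi(t)])=0$ from Lemma \ref{kkku}. Expanding, $[\phi(t),\phi(t)]=\sum_{r,s}[\phi_{r}(t),\phi_{s}(t)]$, where each summand has weight $r+s$ and is homogeneous of degree $r+s$ in $t$. By Lemma \ref{22dd} the space $\ker d_{\bigwedge^{2}\n^{\ast}}$, and hence the harmonic space ${\mathcal H}^{2}(\bigwedge\n^{\ast})$ contained in it, is concentrated in weights at most $\nu+1$; since $H$ preserves weight it annihilates every component of weight exceeding $\nu+1$. Thus $H([\phi_{r},\phi_{s}])=0$ whenever $r+s>\nu+1$, so $H([\phi(t),\phi(t)])=\sum_{r+s\le\nu+1}H([\phi_{r}(t),\phi_{s}(t)])$ is a polynomial in $t$ of degree at most $\nu+1$ whose lowest-order term, coming from $[\phi_{1},\phi_{1}]$, is quadratic, so the linear terms vanish. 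This exhibits $(F(\bigwedge\n^{\ast},\g),0)$ as cut out by polynomial equations of degree at most $\nu+1$.

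The main obstacle I anticipate is justifying that the entire Hodge-theoretic package ($d^{\ast}$, $G$, $H$, $\delta$) is weight-homogeneous, since this is precisely where natural gradedness is indispensable: without it one only has the filtration estimate $d(\frak a^{(k)\ast})\subset\bigoplus_{i+j\le k}\frak a^{(i)\ast}\wedge\frak a^{(j)\ast}$, and the clean identity $\delta(W_{k})\subset\frak a^{(k)\ast}$ together with the homogeneity of $\phi_{r}$ would both fail. Some care is also needed in verifying that $\bar\ast$ shifts weight by the constant $W_{\mathrm{top}}$, which rests entirely on the weight-orthogonality of the metric $g$.
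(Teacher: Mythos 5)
Your proposal is correct and follows essentially the same route as the paper: both run the Kuranishi recursion of Lemma \ref{kkku}, use the natural grading (with a weight-orthogonal metric) to show $\phi_{r}(t)\in\frak a^{(r)\ast}\otimes\g$ so that $\phi(t)$ truncates at degree $\nu$, and then invoke Lemma \ref{22dd} to conclude that $H$ annihilates $[\phi_{i},\phi_{j}]$ whenever $i+j>\nu+1$. The only cosmetic difference is that you phrase the key step as weight-homogeneity of the whole Hodge package, where the paper just records the single identity $d^{\ast}G(W_{k})=d^{-1}\circ\beta(W_{k})\subset\frak a^{(k)\ast}$ and the orthogonality of the $W_{k}$.
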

\begin{proof}
Take a basis $\zeta_{1},\dots, \zeta_{m}$ of ${\mathcal H}^{1}(\bigwedge \frak u^{\ast})\otimes \g$.
For parameters $t=( t_{i})$,  we consider the formal power series $\phi(t)=\sum_{r} \phi_{r}(t)$ with values in  $L^{1}$ given inductively by $\phi_{1}(t)=\sum t_{i}\zeta_{j}$ and
\[\phi_{r}(t)=-\frac{1}{2}\sum_{s=1}^{r-1}\delta[\phi_{s}(t), \phi_{r-s}(t)].\]
By Lemma \ref{kkku},  the analytic germ $(F(\bigwedge\frak u^{\ast}, \frak g), 0)$ is equivalent to the analytic germ in $\C^{m}$ at the origine defined by equations
\[ H\left([\phi(t),\phi(t)]\right)=0 
\]
where $H:\bigwedge^{\ast}\n^{\ast}\otimes \g\to {\mathcal H}^{\ast}(\bigwedge \frak u^{\ast})\otimes \g$ is the projection.

We have
\[[ \frak a^{(i)\ast}\otimes \g, \frak a^{(j)\ast}\otimes \g]\subset  W_{i+j}\otimes\g.
\]
By $d^{\ast}G(W_{k})=d^{-1}\circ \beta(W_{k})\subset \frak a^{(k)\ast}$, we have 
\[d^{\ast}G\otimes {\rm id} ([\frak a^{(i)\ast}\otimes \g, \frak a^{(j)\ast}\otimes \g])\subset  \frak a^{(i+j)\ast}\otimes\g .
\]
This implies  $\phi_{r}(t)\in\frak a^{(r)\ast}\otimes  \g $ and
 we have
\[\phi(t)=\phi_{1}(t)+\dots +\phi_{\nu}(t).
\]
By Lemma \ref{22dd}, we have $ {\mathcal H}^{2}(\bigwedge\n^{\ast})\subset {\rm Ker}\, d_{\bigwedge^{2} \n^{\ast}} \subset  \bigoplus_{l\le \nu+1, }W_{l}$ and hence
\[ H(\bigwedge^{2} \n^{\ast}\otimes \g)\subset    {\rm Ker}\, d_{\bigwedge^{2} \n^{\ast}}\otimes\g\subset    \bigoplus_{l\le \nu+1, }W_{l}\otimes\g.\]
Since we have $ [\phi_{i}(t),\phi_{j}(t)]\in W_{i+j}\otimes \g  $ by $\phi_{r}(t)\in \frak a^{(r)\ast}\otimes \g$,
we have $H[\phi_{i}(t),\phi_{j}(t)]=0$ for $\nu+1<i+j$.
Hence $H[\phi(t),\phi(t)]=0$ are polynomial equations of degree 
at most $\nu+1$. 

\end{proof}

\section{Complex analogy}
\subsection{Complex parallelizable solvmanifolds}
Let $G$ be a simply connected $n$-dimensional complex solvable Lie group.
Consider the Lie algebra $\g_{1,0}$ (resp. $\g_{0,1}$) of the left-invariant holomorphic (resp. anti-holomorphic) vector fields on $G$.
Let $N$ be the nilradical  of $G$.
We can take a  simply connected complex nilpotent subgroup $C\subset G$  such that $G=C\cdot N$ (see \cite{dek}).
Since $C$ is nilpotent, the map
\[C\ni c \mapsto ({\rm Ad}_{c})_{s}\in {\rm Aut}(\g_{1,0})\]
is a homomorphism where $({\rm Ad}_{c})_{s}$ is the semi-simple part of ${\rm Ad}_{s}$.

We have a basis $X_{1},\dots,X_{n}$ of $\g_{1,0}$ such that \[({\rm Ad}_{c})_{s}={\rm diag} (\alpha_{1}(c),\dots,\alpha_{n}(c))\] for $c\in C$.
Let $x_{1},\dots, x_{n}$ be the basis of $\g^{\ast}_{1,0}$ which is dual to $X_{1},\dots ,X_{n}$.

\begin{theorem}{\rm(\cite[Corollary 6.2 and Remark 5]{KDD}])}\label{DDDG}
Suppose $G$ has a lattice $\Gamma$.
Let $B^{\ast}_{\Gamma}$ be the subcomplex of $(A^{0,\ast}(G/\Gamma),\bar\partial) $ defined as
\[B^{\ast}_{\Gamma}=\left\langle \frac{\bar\alpha_{I}}{\alpha_{I} }\bar x_{I}{\Big \vert}\left(\frac{\bar\alpha_{I}}{\alpha_{I}}\right)_{ \vert_{\Gamma}}=1\right\rangle
\]
where
for a multi-index $I=\{i_{1},\dots ,i_{p}\}$ we write $x_{I}=x_{i_{1}}\wedge\dots \wedge x_{i_{p}}$,  and $\alpha_{I}=\alpha_{i_{1}}\cdots \alpha_{i_{p}}$.
Consider the nilshadow $\frak u$ of the $\C$-Lie algebra $\g$.
Then we have
:\begin{itemize}
\item The inclusion $B^{\ast}_{\Gamma}\subset A^{0,\ast}(G/\Gamma)$ induces an  isomorphism in cohomology.
\item $B^{\ast}_{\Gamma}$ can be regarded as a sub-DGA of $\bigwedge\frak u^{\ast}$.
\end{itemize}
\end{theorem}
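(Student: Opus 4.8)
The plan is to establish the two assertions separately, regarding the statement as the Dolbeault counterpart of the de Rham result for $A^{\ast}_{\Gamma}$ recalled in Section \ref{2SE}: the second (structural) assertion is the concrete one, while the cohomology isomorphism in the first assertion carries the analytic weight.

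For the embedding of $B^{\ast}_{\Gamma}$ into $\bigwedge \frak u^{\ast}$, I would reproduce at the anti-holomorphic level the mechanism that, in the real case, identifies $\alpha_{i}^{-1}X_{i}$ with $X_{i}-{\rm ad}_{sX_{i}}$. Concretely, I expect the twisted anti-holomorphic vector fields $\langle (\bar\alpha_{1}/\alpha_{1})\bar X_{1},\dots,(\bar\alpha_{n}/\alpha_{n})\bar X_{n}\rangle$ to form a Lie subalgebra of the complex vector fields on $G$, with the assignment sending $(\bar\alpha_{i}/\alpha_{i})\bar X_{i}$ to the corresponding generator of $\frak u$ being a Lie algebra isomorphism. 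The key point is a bracket computation showing that multiplication by $\bar\alpha_{i}/\alpha_{i}$ precisely cancels the semisimple part $({\rm Ad}_{c})_{s}$ of the adjoint action, so that the twisted fields close with nilpotent structure constants; dualizing and comparing $\bar\partial$ with the Chevalley--Eilenberg differential of $\frak u$ then exhibits $B^{\ast}_{\Gamma}$ as a sub-DGA of $\bigwedge \frak u^{\ast}$, with the condition $(\bar\alpha_{I}/\alpha_{I})_{\vert_{\Gamma}}=1$ being exactly what makes the twisted forms descend to $G/\Gamma$.

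For the cohomology isomorphism $H^{\ast}(B^{\ast}_{\Gamma})\cong H^{\ast}(A^{0,\ast}(G/\Gamma))$, my plan is a reduction to the nilradical followed by a weight argument. Using Mostow's structure theorem I would arrange the fibration of $G/\Gamma$ over a torus with complex parallelizable nilmanifold fibers modeled on $N$, taking as base case the theorem (the Dolbeault analog of Nomizu's theorem, due to Sakane) that left-invariant $(0,\ast)$-forms compute the Dolbeault cohomology of a complex parallelizable nilmanifold. Decomposing $A^{0,\ast}(G/\Gamma)$ into weight spaces for the action induced by $({\rm Ad})_{s}$, I would then show that the cohomology is carried entirely by the resonant weights, namely those multi-indices $I$ with $(\bar\alpha_{I}/\alpha_{I})_{\vert_{\Gamma}}=1$, which is precisely $B^{\ast}_{\Gamma}$; a spectral sequence for the fibration, or an averaging/harmonic projection over the torus direction, organizes the comparison.

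The hardest step will be this acyclicity of the non-resonant weights: one must rule out contributions to Dolbeault cohomology from characters that are nontrivial on $\Gamma$. This requires controlling the restriction of the $\alpha_{i}$ to $\Gamma$ via Mostow's theorem together with a density input on the weights, and is where the general machinery of \cite{KDD}, paralleling \cite{K2}, does the real work; the formal parts of the argument---the bracket identities, the $\bar\partial$-compatibility, and the identification of the weight-zero subcomplex with $\bigwedge \frak u^{\ast}$---are then routine.
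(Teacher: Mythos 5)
The paper does not prove this statement: it is quoted verbatim from \cite{KDD} (Corollary 6.2 and Remark 5), so there is no in-paper argument to compare yours against. Judged on its own terms, your outline is a reasonable reconstruction of the strategy of the cited reference --- the second assertion by dualizing a twisted frame of anti-holomorphic vector fields, exactly mirroring the mechanism $\alpha_{i}^{-1}X_{i}\mapsto X_{i}-{\rm ad}_{sX_{i}}$ recalled in Section \ref{2SE} for the de Rham case, and the first assertion by a weight decomposition over the diagonalized $({\rm Ad})_{s}$-action, with the Mostow fibration and Sakane's theorem for complex parallelizable nilmanifolds as the base case and a vanishing statement for non-resonant weights doing the real work.

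However, as a proof it is circular at precisely the decisive point: you defer the acyclicity of the non-resonant weight spaces to ``the general machinery of \cite{KDD}'', which is the paper whose result you are supposed to be proving. That vanishing (that a character of $G$ restricting nontrivially to $\Gamma$ contributes nothing to Dolbeault cohomology) is the entire analytic content of the theorem, and your sketch names it as the hardest step without supplying an argument. Two smaller points: the twist on the vector fields should be the reciprocal of the twist on the forms (the frame dual to $\frac{\bar\alpha_{i}}{\alpha_{i}}\bar x_{i}$ is $\frac{\alpha_{i}}{\bar\alpha_{i}}\bar X_{i}$, not $\frac{\bar\alpha_{i}}{\alpha_{i}}\bar X_{i}$, matching $\alpha_{i}^{-1}X_{i}$ dual to $\alpha_{i}x_{i}$ in the real case); and you should say explicitly where the condition $(\bar\alpha_{I}/\alpha_{I})_{\vert_{\Gamma}}=1$ enters the closure of $B^{\ast}_{\Gamma}$ under $\bar\partial$ and under the wedge product, since that is what makes it a sub-DGA rather than merely a subspace of forms descending to the quotient.
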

It is known that  a simply connected solvable Lie group $G$ admitting a lattice $\Gamma$ is unimodular.
Hence we have $\alpha_{1}\cdots \alpha_{n}=1$.
For a multi-index $I\subset \{1,\dots, n\}$ and its complement $I^{\prime}=\{1,\dots, n\}-I$,
if $\left(\frac{\bar\alpha_{I}}{\alpha_{I}}\right)_{ \vert_{\Gamma}}=1$
then $\left(\frac{\bar\alpha_{I^{\prime}}}{\alpha_{I^{\prime}}}\right)_{ \vert_{\Gamma}}=1$.
Thus the DGA  $B^{\ast}_{\Gamma}$ as in Theorem \ref{DDDG} is of PD-type.

\subsection{Deformations of holomorphic vector bundles}
 For a compact complex manifold $(M,J)$ and a holomorphic  vector bundle $E$ over $M$,  consider  \[L^{\ast}=A^{0,\ast}(M, {\rm End}(E))\] the differential graded Lie algebra of differential forms of $(0,\ast)$-type with values in the holomorphic vector bundle ${\rm End}(E)$ with the Dolbeault operator induced by the holomorphic structure on $E$.
Then the Kuranishi space
 ${\mathcal K}(L^{\ast})$  represents the  deformation functor for deformations of holomorphic  structures on $E$ (see \cite{GM1} and \cite{GM2}).

Let $G$ be a simply connected $n$-dimensional complex solvable Lie group with a lattice $\Gamma$.
For the Lie algebra ${\frak gl}_{n}(\C)$ of complex valued $n\times n$ matrices,
we consider the DGLA $L^{\ast}=A^{0,\ast}(G/\Gamma)\otimes {\frak gl}_{n}(\C)$.
Then  the Kuranishi space
 ${\mathcal K}(L^{\ast})$  represents the  deformation functor for deformations of holomorphic  structures on $G/\Gamma\times \C^{n}$ near the trivial holomorphic structure.
 As an analytic germ, the Kuranishi space ${\mathcal K}(L^{\ast})$  is an invariant under quasi-isomorphisms between analytic DGLAs.
Hence by Theorem \ref{DDDG}, considering the DGLA $\overline{L^{\ast}}=B^{\ast}_{\Gamma}\otimes  {\frak gl}_{n}(\C)$,  the analytic germ ${\mathcal K}(L^{\ast})$ is equivalent to  ${\mathcal K}(\overline{L^{\ast}})$.
As Section \ref{FINKU}, the analytic germ ${\mathcal K}(\overline{L^{\ast}})$ is equivalent to the analytic germ $(F(B^{\ast}_{\Gamma}, {\frak gl}_{n}(\C)), 0)$.
Hence we have the following theorem.
\begin{theorem}
Let $G$ be a simply connected complex solvable Lie group with a lattice $\Gamma$ and $\g$ the $\C$-Lie algebra of $G$.
We consider the nilshadow $\frak u$ of $\g$.
Then the analytic germ which  represents the  deformation functor for deformations of holomorphic  structures on $G/\Gamma\times \C^{n}$ near the trivial holomorphic structure is  linearly embedded in the analytic germ $(F(\bigwedge\frak u^{\ast}, {\frak gl}_{n}(\C)), 0)$.

Moreover, suppose that the Lie algebra $\frak u$  is $\nu$-step naturally graded.
Then
such analytic germ  is  cut out by
 polynomial equations of degree 
at most $\nu+1$.
\end{theorem}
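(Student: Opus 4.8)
The plan is to transcribe the real-variable arguments of Theorems~\ref{UTD} and~\ref{realnat} into the Dolbeault setting, replacing the de Rham sub-DGA $A^{\ast}_{\Gamma}$ by the sub-DGA $B^{\ast}_{\Gamma}$ of Theorem~\ref{DDDG}. As recorded just before the statement, the deformation germ is already identified with $(F(B^{\ast}_{\Gamma}, {\frak gl}_{n}(\C)), 0)$: the Kuranishi space ${\mathcal K}(L^{\ast})$ is equivalent to ${\mathcal K}(\overline{L^{\ast}})$ by quasi-isomorphism invariance, and, since $B^{0}_{\Gamma}=\C$ gives $d(B^{0}_{\Gamma}\otimes {\frak gl}_{n}(\C))=0$, the germ $({\mathcal K}(\overline{L^{\ast}}),0)$ coincides with $(F(B^{\ast}_{\Gamma}, {\frak gl}_{n}(\C)), 0)$ as in Section~\ref{FINKU}. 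So it suffices to study the latter.

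For the linear embedding I would invoke the second assertion of Theorem~\ref{DDDG}, that $B^{\ast}_{\Gamma}$ is a sub-DGA of $\bigwedge \frak u^{\ast}$. Set $V=B^{1}_{\Gamma}\otimes {\frak gl}_{n}(\C)$, a linear subspace of the ambient Maurer--Cartan space $\frak u^{\ast}\otimes {\frak gl}_{n}(\C)$. For $\omega\in V$ one has $d\omega\in B^{2}_{\Gamma}\otimes {\frak gl}_{n}(\C)$ and $[\omega,\omega]\in B^{2}_{\Gamma}\otimes {\frak gl}_{n}(\C)$, because $B^{\ast}_{\Gamma}$ is stable under the differential and under the product, hence under the induced bracket. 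Since $B^{2}_{\Gamma}$ is a subspace of $\bigwedge^{2}\frak u^{\ast}$, the equation $d\omega+\frac{1}{2}[\omega,\omega]=0$ read in $\bigwedge^{2}\frak u^{\ast}\otimes {\frak gl}_{n}(\C)$ is equivalent to the same equation read in $B^{2}_{\Gamma}\otimes {\frak gl}_{n}(\C)$. Thus the defining equations of $F(\bigwedge\frak u^{\ast}, {\frak gl}_{n}(\C))$, restricted to $V$, cut out exactly $F(B^{\ast}_{\Gamma}, {\frak gl}_{n}(\C))$, which is precisely the assertion that the germ is linearly embedded in $(F(\bigwedge\frak u^{\ast}, {\frak gl}_{n}(\C)), 0)$.

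For the degree bound I would apply Proposition~\ref{KKKUU} with $\n=\frak u$ and $\frak g={\frak gl}_{n}(\C)$: if $\frak u$ is $\nu$-step naturally graded, then $(F(\bigwedge\frak u^{\ast}, {\frak gl}_{n}(\C)), 0)$ is cut out by polynomial equations of degree at most $\nu+1$. Combining this with the linear embedding just obtained and the elementary principle recorded in the introduction---that a germ linearly embedded in a germ cut out by polynomial equations of degree at most $\nu+1$ is itself cut out by such equations---yields that the deformation germ is cut out by polynomial equations of degree at most $\nu+1$.

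I expect no essential difficulty, as the theorem is a formal assembly of machinery already in place; it is the exact complex analogue of the chain Theorem~\ref{UTD} $\Rightarrow$ Theorem~\ref{realnat}. The single point deserving care is the verification that the sub-DGA structure of Theorem~\ref{DDDG} respects both $\bar\partial$ and multiplication, so that the induced DGLA bracket on $B^{\ast}_{\Gamma}\otimes {\frak gl}_{n}(\C)$ preserves $B^{\ast}_{\Gamma}\otimes {\frak gl}_{n}(\C)$; this is exactly what makes the restriction-to-$V$ argument legitimate and thereby produces the linear embedding.
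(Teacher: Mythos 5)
Your proposal is correct and follows essentially the same route as the paper: reduction of the Kuranishi space of $A^{0,\ast}(G/\Gamma)\otimes{\frak gl}_{n}(\C)$ to $(F(B^{\ast}_{\Gamma},{\frak gl}_{n}(\C)),0)$ via quasi-isomorphism invariance and Section \ref{FINKU}, linear embedding via the sub-DGA assertion of Theorem \ref{DDDG}, and the degree bound via Proposition \ref{KKKUU} together with the elementary principle on linearly embedded germs from the introduction. Your explicit check that the Maurer--Cartan equation restricted to $B^{1}_{\Gamma}\otimes{\frak gl}_{n}(\C)$ lands in $B^{2}_{\Gamma}\otimes{\frak gl}_{n}(\C)$ is the (implicit) content the paper relies on when it parallels Theorem \ref{UTD}.
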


  \section*{Acknowledgements}
This research is supported by JSPS Research Fellowships for Young Scientists.

\end{document}